\theoremstyle{plain}
\newtheorem{theorem}{Theorem}[section]
\newtheorem{lemma}[theorem]{Lemma}
\newtheorem{cor}[theorem]{Corollary}
\theoremstyle{definition}
\newtheorem{definition}[theorem]{Definition}
\newtheorem{remark}[theorem]{Remark}
\newtheorem{example}[theorem]{Example}
\theoremstyle{remark}
\renewcommand{\tilde}{\widetilde}
\renewcommand{\bar}{\overline}
\newcommand{\bbC}{\mathbb{C}}
\newcommand{\bbD}{\mathbb{D}}
\newcommand{\bbK}{\mathbb{K}}
\newcommand{\bbN}{\mathbb{N}}
\newcommand{\bbR}{\mathbb{R}}
\newcommand{\bbZ}{\mathbb{Z}}
\newcommand{\raw}{\rightarrow}
\newcommand{\xra}{\xrightarrow}
\newcommand{\cug}{\subseteq}
\newcommand{\eps}{\varepsilon}
\newcommand{\undx}{{\underline{x}}}
\newcommand{\undw}{{\underline{w}}}
\newcommand{\undH}{{\underline{\mathbf{H}}}}
\newcommand{\calC}{\mathcal{C}}
\newcommand{\calH}{\mathcal{H}}
\newcommand{\calK}{\mathcal{K}}
\newcommand{\calO}{\mathcal{O}}
\newcommand{\frh}{\mathfrak{h}}
\newcommand{\frB}{\mathfrak{B}}
\newcommand{\sbim}{{\mathbb{S}Bim}}
\newcommand{\isom}{\xrightarrow{\sim}}
\newcommand{\bfH}{\mathbf{H}}
\newcommand{\bfP}{\mathbf{P}}
\newcommand{\cus}{\stackrel{\oplus}{\subseteq}}
\DeclareMathOperator{\cha}{ch}
\DeclareMathOperator{\chara}{char}
\DeclareMathOperator{\End}{End}
\DeclareMathOperator{\grrk}{grrk}
\DeclareMathOperator{\Hom}{Hom}
\DeclareMathOperator{\res}{res}
\DeclareMathOperator{\spa}{span}
\DeclareMathOperator{\supp}{supp}
\DeclareMathOperator{\Sym}{Sym}
\DeclareMathOperator{\cone}{cone}
\title{Singular Rouquier Complexes}
\author{Leonardo Patimo}
\begin{document}

\maketitle

\begin{abstract}
We generalize the construction of Rouquier complexes to the setting of singular Soergel bimodules by taking minimal complexes of restricted Rouquier complexes. We show that  they retain many of the properties of ordinary Rouquier complexes: they are $\Delta$-split, they satisfy a vanishing formula and, when Soergel's conjecture holds they are perverse. As an application, we establish Hodge theory for singular Soergel bimodules. 
\end{abstract}

\section{Introduction}

Consider  a complex reductive algebraic group $G$ with Borel subgroup $B$ and Weyl group $W$. The category of $B$-equivariant parity sheaves on the flag variety $X=G/B$ provides a categorification of the Hecke algebra $\calH$ of $W$. 
Soergel \cite{S1, S4} defined an alternative categorification of the Hecke algebra $\calH$ via certain graded bimodules over $R=\Sym^\bullet(\frh^*)$, where $\frh^*$ is a (well-behaved) representation of $W$. 
A major advantage of using Soergel bimodules is that their construction is completely algebraic, in particular their definition makes sense for an arbitrary Coxeter group $W$.

The situation is very similar when we consider 
a parabolic subgroup $P$ of $G$ containing $B$ and the partial flag variety $G/P$.
Let $I$ be the subset of the simple reflections $S\subset W$ corresponding to $P$. Let $W_I$ denote the subgroup of $W$ generated by $I$. Then, $B$-equivariant parity sheaves on $G/P$ categorify of the left ideal $\calH^I:=\calH \undH_I$ of the Hecke algebra $\calH$, where $\undH_I\in \calH$ is the Kazhdan-Lusztig basis element corresponding to the longest element in $W_I$.
In this case an algebraic replacement is provided by singular Soergel bimodules, introduced by Williamson in \cite{W4}. Singular Soergel bimodules are graded $(R,R^I)$ bimodules, where $R^I$ denotes the subring of $W_I$-invariants in $W$. The construction of singular Soergel bimodules is algebraic and works for any Coxeter group $W$ and any subset $I\subset S$ such that $W_I$ is finite.
The indecomposable Soergel bimodules $B_w^I$ are parameterized (up to grading shifts)  by elements $w\in W^I$, where $W^I$ is the set of  elements of $W$ which are minimal in their right $W_I$-coset.

For a Coxeter group $W$, let $B_W$ denote the corresponding Artin braid group. 
In \cite{Ro}, Rouquier introduced, inside the homotopy category of Soergel bimodules, a categorification of $B_W$: the $2$-braid group $\frB_W$. Let us briefly recall its construction. 
 For any element $s\in S$ let $B_s=R\otimes_{R^s}R(1)$ be the corresponding indecomposable Soergel bimodule and consider the complexes
\[F_s:= [0\raw B_s \raw R(1)\raw 0]\]
\[E_s:= [0\raw R(-1) \raw B_s\raw 0]\]
Notice that $F_s$ and $E_s$ are inverse to each other with respect to the tensor product operation, so we can also write $E_s=(F_s)^{-1}$. 
Then the objects in $\frB_W$ are the complexes that can be obtained as products of $F_s$ and of $E_s$, for $s\in S$.

If $W$ is a finite group then $\frB_W$ is a faithful categorification of $B_W$ \cite{KS,BT,Je}: if to any $w=s_1^{\eps_1}s_2^{\eps_2}\ldots s_k^{\eps_k}\in B_W$ (where $\eps_i =\pm 1$) we associate the complex
\[F_w := (F_{s_1})^{\eps_1}(F_{s_2})^{\eps_2} \ldots (F_{s_k})^{\eps_k}\in \frB_W\] 
then, for $v,w\in B_w$, we have $F_w\cong F_v$ if and only if $w=v$.

Any elements of the Coxeter group $W$ has two distinguished lifts to $B_W$, and hence to $\frB_W$. 
If $w=s_1s_2\ldots s_k\in W$ we define  $F_{s_1}F_{s_2}\ldots F_{s_k}$ to be the positive lift and  $E_{s_1}E_{s_2}\ldots E_{s_k}$ to be the negative lift of $w$ in $\frB_W$.
Let $F_w$ be the minimal complex of $F_{s_1}F_{s_2}\ldots F_{s_k}$, i.e. $F_w$ is the complex in $\calC^b(\sbim)$ obtained by removing all the contractible summands from $F_{s_1}F_{s_2}\ldots F_{s_k}$. Similarly, let $E_w$ be the minimal complex of $E_{s_1}E_{s_2}\ldots E_{s_k}$.
The complexes $F_w$ and $E_w$ are called Rouquier complexes.

One can easily repeat Rouquier's construction in the singular Soergel bimodules world by restricting  a complex of $(R,R)$-bimodules to a complex of $(R,R^I)$-bimodules. 
For any $w\in W^I$ we define the \emph{singular Rouquier complex} $F_w^I$ to be the minimal complex of $\res_{R,R}^{R,R^I} (F_w)$ in the category of complexes of $I$-singular Soergel bimodules $\calC^b(\sbim^I)$. 
We show that \emph{singular 2-braid group} retains some of the important property of the $2$-braid group. 

In \cite{LW}, Libedinsky and Williamson showed that the $2$-braid groups have standard and costandard objects. More precisely, they show that we have  the following vanishing property:
\begin{equation}\label{homvanintro}
\Hom(F_w,E_v[i])=\begin{cases}k & \text{if }v=w\text{ and }i=0 \\ 0 & \text{otherwise}. \end{cases}\end{equation}
(If $W$ is a Weyl group and $k=\bbC$, this statement is equivalent to the existence of standard and costandard object in category $\calO$.)
The main result of this paper is the generalization of the results in \cite{LW} to singular Rouquier complexes. In particular, we prove the singular version of \eqref{homvanintro}:
\[ \Hom(F_w^I,E_v^I[i])=\begin{cases}k & \text{if }v=w\text{ and }i=0 \\ 0 & \text{otherwise}. \end{cases}\]

 It follows that also singular $2$-braid groups  have standard and costandard objects.
We discuss now two applications of this generalization.

\begin{itemize}
	\item In \cite{Pat3} we
	restricts ourselves to the case of Grassmannian, i.e. we consider the symmetric group $W=S_n$ and $W_I$ is a maximal parabolic subgroup. In this setting, summands in singular Rouquier complexes can be understood using the combinatorics of Dyck partitions. 
	A careful study of the first two terms in singular Rouquier complexes allows us to deduce some crucial relations involving maps of degree one, and  these relations allow us to  explicitly construct bases of the morphisms spaces between singular Soergel bimodules. In particular, we also obtain bases for the intersection cohomology of Schubert varieties that naturally extend the Schubert basis.
	\item
When Soergel's conjecture holds, for example when we work over the real numbers and we consider the same representation  of $W$ as in \cite[Prop. 2.1]{S4}, then indecomposable Soergel bimodules categorify the Kazhdan-Lusztig basis in the Hecke algebra. 
In this case, 
Rouquier complexes are perverse and
they categorify the inverse Kazhdan-Lusztig polynomials (as in \cite[Remark 6.10]{EW1}). We show that the same is true for singular Rouquier complexes: they are perverse and from the multiplicities of its summands we can reconstruct the inverse parabolic Kazhdan-Lusztig polynomial.

In \cite{EW1} Rouquier complexes are a crucial tool in establishing Hodge theory for Soergel bimodules, and hence in proving Soergel's conjecture. Elias and Williamson's idea is to emulate the geometric proof of the hard Lefschetz theorem and of the bilinear Hodge-Riemann relations. Here the Rouquier complexes have the decisive role of providing a surrogate for the Lefschetz operator. After having shown that singular Rouquier complexes are perverse, it is rather straightforward 
to adapt the arguments in \cite{EW1} to singular Soergel bimodules.  Hence, we obtain a proof of the hard Lefschetz theorem (Theorem \ref{SHL}) and of the Hodge-Riemann bilinear relations (Theorem \ref{SHR}) for singular Soergel bimodules.

 We remark that using the Hodge theory of singular Soergel bimodules we can give a (slightly) different proof of Soergel's conjecture (cf. Remark \ref{newSConj}), which is closer to the  geometric proof of the  decomposition theorem discussed in \cite{dCM3}.
\end{itemize}

\subsection*{Acknowledgments}
The results of this paper are part of my PhD thesis, written at Max Planck Institute for Mathematics in Bonn under the supervision of Geordie Williamson. I wish to warmly thank him for everything he has taught me.

\section{Hecke algebra}
We recall some basic notation about Coxeter groups and their Hecke algebras.
Let $(W,S)$ be a Coxeter system. For $s,t\in S$, let $m_{st}$ denote the order of $(st)$. We denote the  length function by $\ell$ and the  Bruhat order by $\leq$.

The Hecke algebra $\calH:=\calH(W,S)$ is the unital associative $\bbZ[v,v^{-1}]$ algebra with generators $\bfH_s$, for $s\in S$, subject to the following relations, for any $s,t\in S$:
\[\overbrace{\bfH_s\bfH_t\ldots}^{m_st}=\overbrace{\bfH_t\bfH_s\ldots}^{m_{st}}\]
\[\bfH_s^2=-(v-v^{-1})\bfH_s+1.\]
For any $x\in W$  the element $\bfH_x$ is defined as  $\bfH_x:=\bfH_{s_1}\ldots \bfH_{s_l}$ where $x=s_1s_2\ldots s_l$ is any reduced expression for $x$. The set $\{\bfH_x\}_{x\in W}$ is a $\bbZ[v,v^{-1}]$-basis of $\calH$, called the \emph{standard basis}.

We denote by $\bar{(-)}:\calH\raw \calH$ the involution defined by  $\bar{\bfH_s}=\bfH_s^{-1}$ and $\bar{v}=v^{-1}$.
For any $x\in W$ the Kazhdan-Lusztig basis element $\undH_x$ is the unique element in $\calH$ such that $\bar{\undH_x}=\undH_x$ and that we can write 
\[\undH_x=\bfH_x+\sum_{y<x}h_{y,x}(v)\bfH_y.\]
 for some polynomials $h_{y,x}(v)\in v\bbZ[v]$.

The polynomials $h_{y,x}(v)$ are called the \emph{Kazhdan-Lusztig polynomials}. The set $\{\undH_x\}_{x\in W}$ is a $\bbZ[v,v^{-1}]$-basis of $\calH$, called the \emph {Kazhdan-Lusztig basis}.

There exists an anti-involution $a$ of $\calH$ defined by $a(\bfH_x) = \bfH_{x^{-1}}$ for $x\in W$ and $a(v)=v$. The trace $\eps$ is the $\bbZ[v,v^{-1}]$-linear map defined by $\eps(\bfH_w) =\delta_{w,id}$. We define a $\bbZ[v, v^{-1}]$-bilinear pairing
\begin{equation}\label{heckepairing}
(-,-) : \calH \times \calH \raw Z[v, v ^{-1}]
\end{equation}
by $(h, h') = \eps(a(h)h' )$.

For a subset $I\subset S$, let $W_I$ be the parabolic subgroup of $W$ generated by $I$. A subset $I\cug S$ is said \emph{finitary} if the group $W_I$ is finite. We denote by $W^I$ the set of right $I$-minimal elements, i.e. the set of elements $x\in W$ such that  $xs\geq x$ for all $s\in I$.

Let $q:W\raw W/W_I$ denote the projection map. For $y\in W/W_I$  we denote by $y_-$ the minimal element in the coset $y$. The bijection $W^I\cong W/W_I$ induces a partial order on $W/W_I$ by restricting the Bruhat order of $W$, i.e. for $y,z\in W/W_I$ we say $y\leq z$ if and only if $y_-\leq z_-$. The projection $q$ is a strict morphism of posets:
\begin{lemma}[{\cite[Lemma 2.2]{Dou}}]\label{poset}
	Let $w\geq v$ in $W$. Then $q(w)\geq q(v)$.
\end{lemma}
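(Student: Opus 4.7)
My plan is to reformulate the claim inside $W$ and then proceed by induction. By definition of the order on $W/W_I$, the statement $q(w) \geq q(v)$ is equivalent to $v^I \leq w^I$ in the Bruhat order of $W$, where I set $v^I := q(v)_-$ and $w^I := q(w)_-$. Write the standard factorizations $v = v^I v_I$ and $w = w^I w_I$ with $v_I, w_I \in W_I$. These are length-additive because $v^I, w^I \in W^I$, and this has the bookkeeping consequence that I will use throughout: for any $s \in I$ and any $x \in W$, right-multiplication by $s$ preserves the $W_I$-coset, so $(xs)^I = x^I$ and $(xs)_I = x_I s$ with $\ell(x_I s) = \ell(x_I) \pm 1$.

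I would then induct on $N := \ell(v_I) + \ell(w_I)$. The base case $N = 0$ is immediate, since then $v = v^I$ and $w = w^I$ and the conclusion coincides with the hypothesis. For the inductive step, the main tool is the classical lifting property of the Bruhat order: if $v \leq w$ and a simple reflection $s$ satisfies $ws < w$, then $\min(v, vs) \leq ws$. If $w_I \neq e$, I pick a right descent $s \in I$ of $w_I$ (so $ws < w$) and apply the lifting property to obtain $v' := \min(v, vs) \leq ws =: w'$. By the bookkeeping above, $(v')^I = v^I$, $(w')^I = w^I$, $\ell((w')_I) = \ell(w_I) - 1$, and $\ell((v')_I) \leq \ell(v_I)$, so $N$ strictly decreases and the induction hypothesis delivers $v^I \leq w^I$. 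If instead $w_I = e$ but $v_I \neq e$, I pick a right descent $s \in I$ of $v_I$, so that $vs < v$; then $vs \leq v \leq w$, the pair $(vs, w)$ has the same $v^I$ and strictly smaller $N$, and induction again applies.

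The main obstacle is really only the parabolic bookkeeping summarized in the first paragraph: once the length-additivity of $x = x^I x_I$ and its interaction with right multiplication by $s \in I$ are in hand, the two inductive cases become essentially identical applications of the lifting property, and the argument is completely mechanical.
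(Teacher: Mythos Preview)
Your argument is correct. The paper itself does not supply a proof of this lemma; it simply quotes the result from \cite[Lemma~2.2]{Dou}. Your inductive reduction via the lifting property is the standard route to this statement and goes through as written: the key bookkeeping facts (that right multiplication by $s\in I$ preserves the coset representative and that the factorization $x=x^I x_I$ is length-additive) are exactly what is needed, and the two cases of the induction are handled cleanly.

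One purely notational remark if this is to be spliced into the present paper: here the symbol $w_I$ is already reserved for the longest element of $W_I$, so you should rename the $W_I$-components of $v$ and $w$ (e.g.\ write $v=v^I v_\circ$, $w=w^I w_\circ$ with $v_\circ,w_\circ\in W_I$) to avoid a clash.
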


 Let $I$ be finitary and let $w_I$ be the longest element in $W_I$.
 We denote by $\tilde{\pi}(I)$ be the \emph{Poincar\'e polynomial} of $W_I$, i.e.
 \[\tilde{\pi}(I):=\sum_{w\in W_I}v^{2\ell(w)}.\]  We define 
\[\undH_I:=\undH_{w_I}=\sum_{x\in W_I}v^{\ell(w_I)-\ell(x)}\bfH_x.\] 

Consider the left ideal $\calH^I:=\calH\undH_I$ of $\calH$.
For $x\in W^I$ we define $\bfH_x^I=\bfH_x\undH_I$.
 The Kazhdan-Lusztig basis element $\undH_y$ belongs to $\calH^I$ if and only if $y$ is maximal in its right $W_I$-coset. Thus, for $x\in W^I$, we can define $\undH_x^I=\undH_{xw_I}$. The set $\{\undH_x^I\}_{x\in W^I}$ forms a $\bbZ[v,v^{-1}]$-basis of $\calH^I$, called the \emph{$I$-parabolic Kazhdan-Lusztig basis} of $\calH^I$. For any $x\in W^I$ we can write
\[\undH_x^I=\bfH_x^I+\sum_{W^I\ni y<x}h_{y,x}^I(v)\bfH_y^I.\]

The polynomials $h_{y,x}^I(v)$ are called the \emph{$I$-parabolic Kazhdan-Lusztig polynomials} and are related to the ordinary Kazhdan-Lusztig polynomials by the formula $h_{y,x}^I(v)=h_{yw_I,xw_I}(v)$.

\section{One-sided Singular Soergel Bimodules}

The main reference for this section is \cite{W4}.
We fix a field $\bbK$ and a reflection faithful representation $\frh^*$ of $W$ over $\bbK$ (in the sense of \cite{S4}).
Let $R$ denote the polynomial ring $\Sym_\bbK(\frh^*)$.
We regard $R$ as a graded ring by setting $\deg(\alpha)=2$ for any $\alpha\in \frh^*$.

We fix now  a  finitary subset $I\cug S$. We use the abbreviations $(\frh^*)^I:=(\frh^*)^{W_I}$ and $R^I:=R^{W_I}$ to denote the corresponding subspaces of $W_I$-invariant.
We work in the category of graded $(R,R^I)$-bimodules.
We denote by $(1)$ the grading shift on graded bimodules.
If $J\cug I$, and $B$ is a graded $(R,R^J)$-bimodule $B$ we denote by $B_{I}$ the restriction of $B$ to a graded $(R,R^I)$-bimodule.\label{s:IB}

We make the following assumption: the ring $R$ regarded as a $R^I$-module is free of graded rank $\tilde{\pi}(I)$. 
 This is always the case if $\chara(\bbK)=0$. If $\chara(\bbK)=p$ and $\frh^*$ is the representation obtained by extending scalar on the action of $W$ on the weight lattice, the assumption above is satisfied if $p$ is not a torsion prime for $W$ (cf. \cite[Remark 4.1.2]{W4}).

For $s\in S$ let $B_s:=R\otimes_{R^s}R(1)$.
For any sequence of simple reflections $\undw=(s_1,\ldots,s_k)$  we consider the corresponding Bott-Samelson bimodule
\[BS(\undw):=B_{s_1}\otimes_R B_{s_2}\otimes_R \ldots \otimes_R B_{s_k}.\]

\begin{definition}
The category of $I$-singular Soergel bimodules $\sbim^I$ is the smallest full subcategory of graded $(R,R^I)$-bimodules which contains all Bott-Samelson bimodules $BS(\undw)_I$ and which is closed under direct sums, grading shifts and taking direct summands.

Morphisms in $\sbim^I$ are the morphisms of graded $(R,R^I)$-bimodules of degree $0$ and are denoted by $\Hom(-,-)$.

If $I=\emptyset$ then $\sbim^\emptyset$ is simply denoted by $\sbim$ and called the category of Soergel bimodules.
\end{definition}

For any $i\in \bbZ$ we set $\Hom^i(B,B')=\Hom(B,B'(i))$ and
\[\Hom^\bullet(B,B')=\bigoplus_{i\in \bbZ} \Hom(B, B' (i)).\]

There is a duality functor $\bbD B=\Hom_{R-}^\bullet(B,R)$ on $\sbim^I$. The $(R,R^I)$-bimodule structure on $\bbD B$ is given by 
\[(rfr')(b)=f(rbr')=r f(br')\qquad \text{ for any }f\in \bbD B, b\in B, r\in R, r'\in R^I.\]

\begin{theorem}[Soergel-Williamson categorification theorem {\cite[Theorem 1]{W4}}]\label{SWcat}
	There exists a bijection 
	\[
 W^I  \stackrel{1:1}{\longleftrightarrow}  \left\{
	\begin{array}{c}\text{indecomposable self-dual}\\
		I\text{-singular Soergel bimodules}\\ 
		\text{up to isomorphism}
		\end{array}
	\right\}.\]
We denote by $B_x^I$ the indecomposable self-dual bimodule corresponding to $x$.
Every indecomposable $I$-singular Soergel bimodule is isomorphic up to a shift to some $B_x^I$.

Let $x=s_1s_2\ldots s_k$ be a reduced expression for $x\in W^I$. Then $B_x^I$ is the unique direct summand of $BS(\underline{s_1s_2\ldots s_k})_I$ which is not a direct summand of any Bott-Samelson bimodule of smaller length. 
\end{theorem}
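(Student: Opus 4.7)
The strategy is to imitate Soergel's original categorification theorem, adapted to the singular setting as in Williamson \cite{W4}. Two ingredients are needed: a character map $\cha : [\sbim^I]_\oplus \to \calH^I$ taking values in the parabolic module, and a Soergel-type hom formula for $\sbim^I$. Given these, Krull--Schmidt together with an induction on length yields the bijection.

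I would first construct the character map. On Bott-Samelson bimodules the value should be
\[\cha(BS(\undw)_I) = \bfH_{s_1}\bfH_{s_2}\cdots\bfH_{s_k}\cdot\undH_I\]
up to a normalization involving $\tilde{\pi}(I)$. To check this descends to arbitrary objects, I would prove that any $B \in \sbim^I$ admits a $\Delta$-filtration indexed by $W^I$ whose graded ranks match the expansion of $\cha(B)$ in the basis $\{\bfH_x^I\}_{x \in W^I}$.

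The central technical step, and the main obstacle, is the hom formula: for $B, B' \in \sbim^I$, the space $\Hom^\bullet(B, B')$ should be free as a right $R^I$-module of graded rank $(\cha(B), \cha(B'))^I$, where $(-, -)^I$ is the parabolic analogue of the pairing in \eqref{heckepairing}. The proof combines three inputs: the freeness of $R$ over $R^I$ (the standing assumption of the section), which lets one move between ordinary and singular Hom computations via the restriction $B \mapsto B_I$; Soergel's original hom formula for $\sbim$; and a localization argument computing $\Hom^\bullet$ generically as a product indexed by $W^I$, with each local factor controlled by the $\Delta$-filtration. The delicate point is integrality: the local data must glue to a free $R^I$-module of the predicted graded rank, not merely a module of the correct dimension after inverting roots.

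With the hom formula in hand, the parametrization follows by induction on $\ell(x)$. For $x \in W^I$ with reduced expression $s_1 \ldots s_k$, an expansion in the parabolic basis shows
\[\cha(BS(\underline{s_1 \ldots s_k})_I) = \bfH_x^I + \sum_{W^I \ni y < x}(\ldots)\bfH_y^I.\]
Any Bott-Samelson of word length strictly smaller than $k$ cannot have $\bfH_x^I$ appearing in its expansion, since the maximal-length standard basis element occurring has length at most that of the word. Hence no indecomposable summand of such a Bott-Samelson can carry the character of a putative $B_x^I$. By Krull--Schmidt and the hom formula, a new self-dual indecomposable summand $B_x^I$ therefore appears in $BS(\underline{s_1\ldots s_k})_I$, and is unique because its character has leading term $\bfH_x^I$. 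Conversely, every indecomposable self-dual $B \in \sbim^I$ identifies with some $B_x^I$ by taking $x$ to be the maximal element of its Bruhat-support, completing both the bijection and the characterization of $B_x^I$ as the new summand in the Bott-Samelson of a reduced expression.
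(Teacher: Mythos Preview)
The paper does not prove this theorem: it is stated with a citation to \cite[Theorem 1]{W4} and used as a black box, so there is no ``paper's own proof'' to compare against. What you have written is a reasonable outline of the strategy actually carried out in \cite{W4}, and in that sense it goes beyond what the present paper does.

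Two small corrections to your sketch. First, the character of a Bott--Samelson bimodule is a product of Kazhdan--Lusztig generators, not standard generators: one has $\cha(BS(\undw)_I)=\undH_{s_1}\cdots\undH_{s_k}\,\undH_I$, not $\bfH_{s_1}\cdots\bfH_{s_k}\,\undH_I$. This does not affect your inductive argument, since the leading term in the standard basis is still $\bfH_x^I$. Second, be aware of the order of logical dependence: the paper's formulation of $\cha$ via $\Hom^\bullet_{\not<x}(B^I,B_x^I)$ already presupposes the classification, so for a genuine proof you must, as you indicate, define $\cha$ via $\nabla$- or $\Delta$-filtrations (i.e.\ via the modules $\Gamma^I_{\geq x/>x}B$ and standard modules $R_{x,I}$) \emph{before} the indecomposables are known to exist; this is indeed how \cite{W4} proceeds, and the equivalence with the paper's formulation is the content of Remark~\ref{not<x}.
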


Given two bimodules $B_1^I,B_2^I\in \sbim^I$ and $x\in W^I$, consider the subspace 
\[\Hom^\bullet_{< x}(B_1^I,B_2^I)\cug \Hom^\bullet(B_1^I,B_2^I)\] generated by all the maps $\varphi:B_1^I\raw B_2^I(k)$ which factor through $B_1^I\raw B_y^I(k')\raw B_2^I(k)$ for some $y< x$ and $k'\in \bbZ$.
Let \[ \Hom^\bullet_{\not< x}(B_1^I,B_2^I):=\Hom^\bullet(B_1^I,B_2^I)/\Hom^\bullet_{< x}(B_1^I,B_2^I).\]
Both

 Let $[\sbim^I]$ denote the split Grothendieck group of $\sbim^I$. We regard it as a $\bbZ[v,v^{-1}]$-module via $v\cdot [B] = [B(1)]$.
 If $V=\bigoplus_{i\in \bbZ} R(-i)^{m_i}$ is a graded free $R$-module we define the \emph{graded rank} of $V$ as:
 \[\grrk (V) := \sum_{i\in\bbZ}m_i v^{i}.\]
 
The \emph{character map} is a morphism of $\bbZ[v,v^{-1}]$-modules $\cha: [\sbim^I]\raw \calH^I$ defined by
\[\cha([B^I])=\sum_{x\in W^I}\grrk \Hom^\bullet_{\not< x}(B^I,B_x^I)\bfH_x^I\]
for any $B^I\in \sbim^I$.\footnote{The $R$-module $\Hom^\bullet_{\not< x}(B^I,B_x^I)$ is free: this follows from \cite[Theorem 7.2.2]{W4} and the fact that $\Hom_{\not<x}^\bullet(B^I,B_x^I)\cong \Hom^\bullet_{\not< x}(B^I,R_{x,I}(\ell(x)))$, where $R_{x,I}=(R_x)_I$ is a standard module (cf. Remark \ref{not<x}).} It follows from Theorem \ref{SWcat} that $\cha$ is an isomorphism.
Moreover, the following diagram is commutative:
	 \begin{center} 
	 	\begin{tikzpicture}
	 	\matrix(m)[matrix of math nodes, column sep=50pt, row sep=20pt]
	 	{{[\sbim]\times [\sbim^I]} & {[\sbim^I]}\\
	 		\calH\times\calH^I & \calH^I\\};
	 	\path[->] (m-1-1) edge node[above]{$-\otimes_R-$}(m-1-2)
	 	(m-1-1) edge node[left] {$\cha\times\cha$} (m-2-1)
	 	(m-1-2) edge node[left] {$\cha$} (m-2-2)
	 	(m-2-1) edge node[above] {$m$} (m-2-2);
	 	\end{tikzpicture}
	 \end{center}
	 (here $m$ is the multiplication in $\calH$).
Hence $\sbim^I$ \emph{categorifies} the ideal $\calH^I$ as a module over $\calH$.
We can use the isomorphism $\cha$ to compute the dimension of the space of morphisms in the category $\sbim^I$.

\begin{theorem}[Soergel's Hom Formula for Singular Soergel Bimodules {\cite[Theorem 7.4.1]{W4}}]\label{SHF}
Let $B_1,B_2\in \sbim^I$. Then $\Hom^\bullet(B_1,B_2)$ is a free graded left $R$-module and 
\[\grrk \Hom_{R\otimes R^I}^\bullet(B_1,B_2) = \frac{1}{\tilde{\pi}(I)}( \bar{\cha(B_1)},\cha(B_2) ), \]
where $(-,-)$ is the pairing in the Hecke algebra defined in \eqref{heckepairing}.
\end{theorem}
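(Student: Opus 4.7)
The plan is to deduce the singular hom formula from Soergel's ordinary hom formula (the case $I=\emptyset$, which we take as known) by transferring the $\Hom$ computation to ordinary Soergel bimodules via the induction functor
\[\calI := -\otimes_{R^I}R \colon (R,R^I)\text{-bimod} \longrightarrow (R,R)\text{-bimod},\]
whose right adjoint is restriction of the right action along $R^I\hookrightarrow R$. First I would check that $\calI$ sends $\sbim^I$ into $\sbim$: on a generator, using the standard identification $R\otimes_{R^I}R \cong B_{w_I}(-\ell(w_I))$ in $\sbim$, one has
\[\calI(BS(\undw)_I) \cong BS(\undw)\otimes_R B_{w_I}(-\ell(w_I)) \in \sbim,\]
and the image of a direct summand of $BS(\undw)_I$ is then a direct summand of an object of $\sbim$.

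The key computational step is to observe that, although $\calI(B_2)$ carries a genuinely richer right $R$-action, its restriction as an $(R,R^I)$-bimodule splits as a direct sum of graded shifts of $B_2$. Choosing a homogeneous $R^I$-basis $\{e_i\}$ of $R$ and using that elements of $R^I$ move freely across the tensor product yields
\[\calI(B_2)|_{(R,R^I)} \cong \bigoplus_i B_2(-\deg e_i),\]
whence
\[\grrk\Hom^\bullet_{R\otimes R^I}(B_1,\calI(B_2)|_{(R,R^I)}) = \tilde{\pi}(I)\cdot\grrk\Hom^\bullet_{R\otimes R^I}(B_1,B_2).\]
Combining this with the tensor-restriction adjunction $\Hom^\bullet_{R\otimes R}(\calI(B_1),\calI(B_2)) \cong \Hom^\bullet_{R\otimes R^I}(B_1,\calI(B_2)|_{(R,R^I)})$ and Soergel's ordinary hom formula applied to $\calI(B_1),\calI(B_2)\in\sbim$, we obtain
\[\tilde{\pi}(I)\cdot\grrk\Hom^\bullet_{R\otimes R^I}(B_1,B_2) = (\overline{\cha(\calI(B_1))},\cha(\calI(B_2))).\]

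The remaining ingredient is a character calculation: one shows $\cha(\calI(B))=v^{-\ell(w_I)}\cha(B)$ in $\calH$. This follows from the multiplicativity of the character map (using an extension of the commutative diagram in the excerpt to right tensor products), the character $v^{-\ell(w_I)}\undH_{w_I}$ of $R\otimes_{R^I}R$, and the Hecke algebra identity $\undH_{w_I}^2 = v^{-\ell(w_I)}\tilde{\pi}(I)\undH_{w_I}$ combined with the containment $\cha(B)\in\calH\undH_{w_I}$. Since the pairing is $\bbZ[v,v^{-1}]$-bilinear and $a$ fixes $v$, the factors $v^{-\ell(w_I)}$ and their bar-images $v^{\ell(w_I)}$ cancel inside the pairing, giving $(\overline{\cha(\calI(B_1))},\cha(\calI(B_2))) = (\overline{\cha(B_1)},\cha(B_2))$, and the theorem follows after dividing by $\tilde{\pi}(I)$. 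The $R$-freeness of $\Hom^\bullet_{R\otimes R^I}(B_1,B_2)$ is inherited from the $R$-freeness in the ordinary case via the same chain of isomorphisms. The main obstacle will be rigorously establishing $\cha(\calI(B))=v^{-\ell(w_I)}\cha(B)$: this requires either a direct computation of graded multiplicities of summands in $\calI(B)$, or a careful bookkeeping of characters beyond the one-sided multiplicativity diagram printed in the excerpt, together with the standard but essential Hecke identity for $\undH_{w_I}^2$.
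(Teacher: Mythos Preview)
The paper does not prove this theorem at all: it is stated with a citation to Williamson \cite[Theorem~7.4.1]{W4} and used as a black box throughout. So there is no ``paper's own proof'' to compare against. In \cite{W4} the result is established directly, via $\nabla$-filtrations on singular Soergel bimodules, in parallel with Soergel's original argument; your reduction to the regular case is a genuinely different route.

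Your strategy is sound: the adjunction $\Hom_{R\otimes R}(\calI(B_1),\calI(B_2))\cong\Hom_{R\otimes R^I}(B_1,(\calI(B_2))_I)$ and the splitting $(\calI(B_2))_I\cong\bigoplus_i B_2(-\deg e_i)$ are correct, and together with the regular Hom formula they yield
\[
\tilde{\pi}(I)\cdot\grrk\Hom^\bullet_{R\otimes R^I}(B_1,B_2)=\bigl(\overline{\cha(\calI(B_1))},\cha(\calI(B_2))\bigr).
\]
The freeness assertion also follows, since $\Hom^\bullet_{R\otimes R^I}(B_1,B_2)$ is (up to shift) a graded direct summand of the free $R$-module $\Hom^\bullet_{R\otimes R}(\calI(B_1),\calI(B_2))$, and graded projectives over $R$ are free.

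The delicate point, which you flag yourself, is the identity $\cha(\calI(B))=v^{-\ell(w_I)}\cha(B)$. The sketch you give is close but not yet a proof, and there is a circularity risk: the ``right multiplicativity'' of the character map you invoke is part of Williamson's machinery in \cite{W4} and is developed alongside the singular Hom formula. A self-contained way to get the identity is to use only the isomorphism $\calI(B_x^I)\cong B_{xw_I}(-\ell(w_I))$ (Remark~\ref{alt-def}) together with the following elementary observation: for every $s\in I$ one has $B_{xw_I}\otimes_R B_s\cong B_{xw_I}(1)\oplus B_{xw_I}(-1)$, hence $\cha(B_{xw_I})\undH_s=(v+v^{-1})\cha(B_{xw_I})$, which forces $\cha(B_{xw_I})\in\bigcap_{s\in I}\calH\undH_s=\calH\undH_{w_I}=\calH^I$. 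Once $\cha(\calI(B))\in\calH^I$ is known, the relation $\cha(\calI(B))\undH_{w_I}=\overline{\tilde{\pi}(I)}\,\cha(B)$ (obtained by applying $\cha((-)_I)=\cha(-)\undH_{w_I}$ to $\calI(B)$ and using your splitting) combines with $\undH_{w_I}^2=v^{-\ell(w_I)}\tilde{\pi}(I)\undH_{w_I}$ to give $\cha(\calI(B))=v^{-\ell(w_I)}\cha(B)$, and your argument concludes. You should make sure that the inputs you use here (in particular the formula $\cha(B_I)=\cha(B)\undH_{w_I}$) are established in \cite{W4} prior to and independently of the Hom formula; otherwise replace that step by a direct computation of $\cha((\calI(B))_I)$ from the splitting.
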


We can identify $R\otimes_\bbR R^I$ with the ring of regular functions on $\frh\times (\frh/W_I)$. 
Hence a Soergel bimodule $B^I\in \sbim^I$ can be thought as a quasi-coherent sheaf on $\frh\times (\frh/W_I)$.
The inclusion $R\otimes_\bbR R^I\hookrightarrow R\otimes_\bbR R$ corresponds to the projection map $\pi:\frh\times \frh\raw  \frh\times (\frh/W_I)$. 

For $x \in W$ we denote the twisted graph of $x$ by $Gr(x)$, that is
\[Gr(x) = \{(x\cdot \lambda, \lambda) | \lambda \in \frh\} \cug \frh \times \frh.\]
If $C\cug W$, let $Gr(C)=\bigcup_{x\in C}Gr(x)$.
For a coset $y\in W/W_I$ let $Gr^I(y):=\pi(Gr(y))$. Notice that $Gr^I(y)=\pi(Gr(\tilde{y}))$ for any $\tilde{y}\in y$.  Similarly, if $C\cug W/W_I$, let $Gr^I(C):=\bigcup_{p\in C}Gr^I(p)$.

 The support of every Soergel bimodule $B^I$ is contained in $Gr(W/W_I)$. For $C\cug W/W_I$ we define
\[\Gamma^I_CB=\{b\in B\mid \supp{b}\cug Gr^I(C)\}.\]
We will simply write $\Gamma_C$ for $\Gamma^\emptyset_C$.
For any $B\in \sbim$ and any $C\cug W/W_I$ we have by \cite[Prop 6.1.6]{W4}
\begin{equation}\label{support}
(\Gamma_{q^{-1}(C)}B)_I=\Gamma^I_C (B_I).
\end{equation}

\begin{remark}\label{alt-def}
	We would like to draw attention to  few slight differences with the definitions given in \cite{W4}.
	Our definition of the duality functor $\bbD$ contains a different shift, and thus our self-dual indecomposable bimodules $B_x^I$ coincide with $B_x^I(-\ell(w_I))$ in Williamson's notation. 
	The advantage of our definition of $\bbD$ is that it guarantees that the singular Soergel modules $\bar{B_x^I}=\bbK\otimes_R B_x^I$ have symmetric Betti numbers. This is more natural in the geometric setting where these modules can be  obtained as the hypercohomology of the intersection cohomology complex on a Schubert variety. This choice of the shift is particularly convenient when dealing with Hodge theoretic properties (cf. Section \ref{sec:hodge}).

	We point out that with our definition of the duality $\bbD$ we have
	$\cha(B_I)=\cha(B)\undH_{w_I}$  and if $x\in W^I$ we have 
	\[B_x^I\otimes_{R^I}R(\ell(w_I))\cong B_{xw_I}\in \sbim.\]

	Furthermore, the definition of the category $\sbim^I$ given above slightly differs from   \cite{W4}. To show that the two definitions give rise to equivalent categories it is enough to show that for any $x\in W^I$ we can obtain the indecomposable bimodule $B_x^I$ as a direct summand of the restriction $B_I$ of a Soergel bimodule $B\in \sbim$. Let $\undx$ be a reduced expression for $x$. We have  
	\[\cha(BS(\undx)_I)=\undH_{\undx}\undH_{w_I}=H_x^I+\sum_{W^I\ni y<x}\lambda_y H_y^I,\]
	for some $\lambda_y\in \bbZ_{\geq 0}[v,v^{-1}]$.
	As in the proof of \cite[Theorem 7.4.2]{W4} this implies that $B_x^I$ is a direct summand of $BS(\undx)_I$. 
\end{remark}

\section{Singular Rouquier Complexes} 

Let $\calC^b(\sbim^I)$ be the bounded category of complexes of $I$-singular Soergel bimodules and let $\calK^b(\sbim^I)$ be the corresponding bounded homotopy category.

Following the notation of \cite{EW1}, we indicate the homological degree of an object $F\in \calC^b(\sbim^I)$ on the left as follows:
\[F=[\ldots\raw {}^{i-1}F\raw {}^iF\raw {}^{i+1}F\raw \ldots].\]
We denote by $[-]$ the homological shift, so that ${}^i(F[1])={}^{i+1}F$.

For $s\in S$ let $F_s$ denote the complex\footnote{We use here the notation $\overset{0}{-}$ to indicate where the object in  homological degree $0$ is placed.}
\[F_s=[0\raw \overset{0}{B_s}\xra{d_s} R(1)\raw 0]\]
where $d_s$ is the map defined by $f\otimes g\mapsto fg$. Then tensoring with $F_s$ on the left induces an equivalence on the category $\calK^b(\sbim^I)$. In fact, tensoring on the left with  the complex
\begin{equation}\label{Es}
E_s=[0\raw R(-1)\xra{d'_s} \overset{0}{B_s}\raw 0].
\end{equation}  
gives an inverse. Here the map $d'_s$ is defined by $d'_s(1)=c_s=\frac{1}{2}(\alpha_s\otimes 1+1\otimes \alpha_s)$.

Given $x\in W^I$ and any reduced expression $x=s_1\ldots s_k$, we  consider the complex $F_{s_1}\ldots F_{s_k}$. As an object in $\calK^b(\sbim)$, the complex $F_{s_1}\ldots F_{s_k}$  does not depend on the chosen reduced expression \cite[Proposition 9.2]{Ro}. Hence, also  $(F_{s_1}\ldots F_{s_k})_I$ does not depend on the reduced expression as an object in $\calK^b(\sbim^I)$.

We choose $F_x^I\cus (F_{s_1}\ldots F_{s_k})_I$\label{s:IFx} to be the corresponding minimal complex (cf. \cite[\S 6.1]{EW1}), so $F_x^I\cong (F_{s_1}\ldots F_{s_k})_I$ in $\calK^b(\sbim^I)$ and the complex $F_x^I$ does not contain any contractible direct summand.
We call $F_x^I$ a \emph{$I$-singular Rouquier complex}.

Observe that if $F_x\in \calK^b(\sbim)$ is the Rouquier complex for $x$, i.e. if $F_x$ is the minimal complex for $F_{s_1}\ldots F_{s_k}$, then $F_x^I$ can also be obtained as the minimal complex of $F_{x,I}:=(F_x)_I$ in $\calK^b(\sbim^I)$.

\subsection{Singular Rouquier Complexes are \texorpdfstring{$\Delta$}{Delta}-split}\label{Deltasplit}

If $x\in W^I$ we write $\Gamma^I_{\geq x}$ for the functor $\Gamma^I_{\{y\in W^I\mid y\geq x\}}$ on $\sbim^I$. We define similarly $\Gamma^I_{>x}$, $\Gamma^I_{<x}$ and $\Gamma^I_{\leq x}$. Let $\Gamma^I_{\geq x/>x}B:= (\Gamma^I_{\geq x}B)/(\Gamma^I_{>x}B)$ and $\Gamma^I_{\leq x/<x}B:= (\Gamma^I_{\leq x}B)/(\Gamma^I_{<x}B)$.
Recall the projection $q:W\raw W/W_I$. If $y\in W/W_I$ we have $q^{-1}(\geq y)=\{x\in W\mid x\geq y_-\}$, hence 
\[(\Gamma_{\geq (y_-)}B)_I=\Gamma^I_{\geq y} (B_I).\]

We choose an enumeration $y_1,$ $y_2,$ $y_3\ldots$ of $W^I$ refining the Bruhat order on $W^I$ and an enumeration  $w_1,w_2,\ldots w_{|W_I|}$ of $W_I$ refining the Bruhat order of $W_I$. Let  \[z_1=y_1w_1,\;z_2=y_1w_2,\;\ldots,\; z_{|W_I|}=y_{1}w_{|W_I|},\;z_{|W_I|+1}=y_{2}w_1,\;z_{|W_I|+2}=y_{2}w_{2}\ldots .\]
Using Lemma \ref{poset} we can see that also $z_1,z_2,z_3\ldots$ is an enumeration of $W$ which refines the Bruhat order.

We denote by $\Gamma^I_{\geq m}$ the functor $\Gamma^I_{\{y_i: i\geq m\}}$ on $\sbim^I$ and by $\Gamma_{\geq m}$ the functor $\Gamma_{\{z_i : i\geq m\}}$ on $\sbim$. For $l\geq k$, let 
\[\Gamma^I_{\geq k/\geq l}B:=(\Gamma^I_{\geq k}B)/(\Gamma^I_{\geq l}B)\]
and similarly for $\Gamma_{\geq k/\geq l}$, $\Gamma^I_{\leq k/\leq l}$, $\Gamma_{\leq k/\leq l}$.

All the functors above ($\Gamma^I_{\geq x}, \Gamma^I_{\geq x/>x}$, etc.),  extend to functors between the respective homotopy categories, e.g. 
the functor $\Gamma^I_{\geq k/\geq l}$ extends to a functor 
\[\calK^b(\sbim^I)\raw \calK^b(R\text{-Mod-}R^I)\] which we denote again simply by $\Gamma^I_{\geq k/\geq l}$.

For $x\in W$ let $R_x$ denote the corresponding standard bimodule (cf. \cite{EW1}) and for $x\in W^I$ let $R_{x,I}:=(R_x)_I$.
Fix $y=y_mW_I\in W/W_I$ and $x\in W^I$. Let $k=|W_I|(m-1)+1$, so that we have $y_m=z_k$ and $y_{m+1}=z_{k+|W_I|}$.
Then by \eqref{support} and the hin-und-her Lemma for singular Soergel bimodules \cite[Lemma 6.3.2]{W4} we have 
\begin{equation}\label{meh}
\Gamma^I_{\geq y/>y}(F_{x}^I)\cong\Gamma^I_{\geq y/>y}(F_{x,I})\cong \Gamma^I_{\geq m/\geq m+1}(F_{x,I})\cong(\Gamma_{\geq k/\geq {k+|W_I|}}F_x)_I\in\calK^b(R\text{-Mod-}R^I).
\end{equation}

For any $i$ such that $0\leq i\leq |W_I|-1$ we have an exact sequence of complexes of $R$-bimodules.
\begin{equation}\label{doesnotsplit}
0\raw \Gamma_{\geq k/\geq k+i}F_x\raw\Gamma_{\geq k/\geq k+i+1}F_x \raw \Gamma_{\geq k+i/\geq k+i+1}F_x\raw 0.
\end{equation}
Notice that in general a short exact sequence of complexes does not induce a distinguished triangle in $\calK^b(R$-Mod-$R)$.\footnote{for example, for $s\in S$, we have
	\[ 0 \raw F_s/\Gamma_s F_s \raw F_s \raw \Gamma_s F_s\raw 0\]
but $F_s/\Gamma_s F_s\cong 0$ and $F_s\not\cong R_s(-1)\cong \Gamma_s F_s$ in $\calK^b(R$-Mod-$R)$.}  
However, we claim that, after restricting to $\calK^b(R-$Mod-$R^I)$, the sequence \eqref{doesnotsplit} does indeed induce a triangle in $\calK^b(R$-Mod-$R^I)$. 
\begin{lemma} The restriction to $R$-Mod-$R^I$ of the exact sequence of complexes \eqref{doesnotsplit} is termwise split (i.e. every row is split exact).
\end{lemma}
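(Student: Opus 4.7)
The plan is to verify the splitting termwise. Fixing a homological degree $j\in\bbZ$, the restriction to $R$-Mod-$R^I$ of \eqref{doesnotsplit} in degree $j$ is the short exact sequence of graded $(R,R^I)$-bimodules
\[0\to (\Gamma_{\geq k/\geq k+i}\,{}^jF_x)_I \to (\Gamma_{\geq k/\geq k+i+1}\,{}^jF_x)_I \to (\Gamma_{\geq k+i/\geq k+i+1}\,{}^jF_x)_I \to 0.\]
By the construction of the enumeration $z_1,z_2,\ldots$, the elements $z_k,z_{k+1},\ldots,z_{k+|W_I|-1}$ all lie in the single right $W_I$-coset $y_m W_I$; in particular $z_{k+i}\in y_m W_I$. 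Each term ${}^jF_x$ is a direct summand of a Bott--Samelson bimodule and is therefore a Soergel bimodule admitting a split $\Delta$-flag. Consequently the rightmost term above is the $\Delta$-piece supported on the single element $\{z_{k+i}\}$, and is isomorphic as an $(R,R)$-bimodule to a direct sum $\bigoplus_\nu R_{z_{k+i}}(a_\nu)$ of shifted standard bimodules.

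The key observation I want to use is that for any two elements $z,z'$ in a common right $W_I$-coset, the restrictions $(R_z)_I$ and $(R_{z'})_I$ coincide as graded $(R,R^I)$-bimodules. Indeed, the right $R^I$-action on $R_z$ is $r\cdot q = r\cdot z(q)$ for $q\in R^I = R^{W_I}$, and this depends only on the coset $zW_I$ since $W_I$ acts trivially on $R^I$. Intrinsically, both $(R_z)_I$ and $(R_{z'})_I$ are identified with the structure sheaf $\calO_{Gr^I(q(z))}$ of the image of the twisted graph in $\frh\times(\frh/W_I)$. Applying this to our sequence, the rightmost term becomes $\bigoplus_\nu \calO_{Gr^I(y_m)}(a_\nu)$, a graded free module over the graded ring $\calO_{Gr^I(y_m)}$.

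To conclude, by \eqref{support} all three terms of the short exact sequence are supported on $Gr^I(y_m)$, hence belong to the full abelian subcategory of graded $R\otimes_\bbK R^I$-modules annihilated by the defining ideal of $Gr^I(y_m)$; this subcategory is canonically equivalent to the category of graded $\calO_{Gr^I(y_m)}$-modules. In that category, the rightmost term is graded free, hence graded projective, so the sequence splits, and any such splitting is automatically a morphism of $(R,R^I)$-bimodules.

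The conceptual heart of the argument—and not really an obstacle—is the recognition that restricting from $R\otimes R$ to $R\otimes R^I$ collapses the $|W_I|$ distinct standard bimodules along a single coset into one cyclic module, so the $\Delta$-flag piece concentrated on that coset becomes free, and therefore projective, over the coordinate ring of the coset image; this forces every such short exact sequence to split.
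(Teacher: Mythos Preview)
Your overall strategy is sound and close to what the paper does, but there is a genuine gap at one step. You write that ``by \eqref{support} all three terms of the short exact sequence are supported on $Gr^I(y_m)$, hence belong to the full abelian subcategory of graded $R\otimes_\bbK R^I$-modules annihilated by the defining ideal of $Gr^I(y_m)$.'' The first clause is correct, but the inference is not: set-theoretic support on $Gr^I(y_m)$ does \emph{not} imply annihilation by the defining ideal $J_{y_m}$. For instance $(R\otimes R^I)/J_{y_m}^2$ is set-theoretically supported on $Gr^I(y_m)$ but is not a module over $\calO_{Gr^I(y_m)}$. Your projectivity argument (graded free $\Rightarrow$ graded projective $\Rightarrow$ the sequence splits) is only valid inside the category of $\calO_{Gr^I(y_m)}$-modules, so without knowing that the middle term is genuinely annihilated by $J_{y_m}$ you cannot conclude.

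The missing ingredient is exactly what the paper supplies via \cite[Lemma 6.2.4]{W4}: every extension (in $R$-Mod-$R$) between standard bimodules $R_{y_m w_a}$ and $R_{y_m w_b}$ in the same coset becomes split after restriction to $R$-Mod-$R^I$. Since the middle term carries a finite filtration with such subquotients, this forces $(\Gamma_{\geq k/\geq k+i+1}\,{}^jF_x)_I$ to be a direct sum of shifts of $R_{y_m,I}$, hence an honest $\calO_{Gr^I(y_m)}$-module (indeed free). Equivalently, you could observe that the middle term is a quotient of $(\Gamma_{\geq k/\geq k+|W_I|}\,{}^jF_x)_I=\Gamma^I_{\geq m/\geq m+1}({}^jF_x)_I$, which is free over $R_{y_m,I}$ by Williamson's theory; but that freeness is again the content of \cite[Lemma 6.2.4]{W4}. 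Once you insert this step, your projectivity argument goes through and coincides with the paper's proof. Your ``conceptual heart'' paragraph asserts precisely this freeness, but does not prove it; the passage from ``has a $\Delta$-flag with all pieces isomorphic to $R_{y_m,I}$'' to ``is free over $R_{y_m,I}$'' is the nontrivial point.
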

\begin{proof}
	Each term in $\Gamma_{\geq k+i/\geq k+i+1}F_x$ is isomorphic to direct sums of shifts of $R_{y_mw_{i}}$. By induction on $i$, each term in  $\Gamma_{\geq k/\geq k+i}F_x$ can be obtained as an extensions of the standard modules $R_{y_mw_{j}}$, with $j< i$. By \cite[Lemma 6.2.4]{W4}, all the extensions between $R_{y_mw_{i}}$ and $R_{y_mw_{j}}$ with $j\neq i$ become split after restricting to $R$-Mod-$R^I$. It follows that the exact sequence \eqref{doesnotsplit} becomes termwise split after restricting to $R$-Mod-$R^I$.
\end{proof}

Hence, we have the following distinguished triangle
 in $\calK^b(R$-Mod-$R^I)$:
\begin{equation}\label{butatleastisatriangle}
(\Gamma_{\geq k/\geq k+i}F_x)_I\raw(\Gamma_{\geq k/\geq k+i+1}F_x)_I \raw (\Gamma_{\geq k+i/\geq k+i+1}F_x)_I\xra{[1]}.
\end{equation}

We can now prove the singular analogue of \cite[Prop 3.7]{LW}:

\begin{lemma}\label{deltasplit}
	Let $x,y\in W^I$. Then
	\[\Gamma^I_{\geq y,> y}(F_x^I)=\begin{cases}0&\text{if }y\neq x,\\R_{x,I}(-\ell(x))&\text{if }y=x.\end{cases}\]
\end{lemma}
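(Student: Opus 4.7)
The strategy is to deduce the singular version from its non-singular counterpart in \cite{LW} by filtering $F_x^I$ one standard bimodule at a time, using the distinguished triangles~\eqref{butatleastisatriangle} that have just been set up.

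First, I would apply the isomorphism~\eqref{meh} to rewrite
\[\Gamma^I_{\geq y,>y}(F_x^I)\cong(\Gamma_{\geq k/\geq k+|W_I|}F_x)_I,\]
where $k=|W_I|(m-1)+1$ is the index at which the enumeration $z_1,z_2,\ldots$ first enters the coset $yW_I=y_mW_I$. Then I would iterate the triangles~\eqref{butatleastisatriangle} for $i=0,1,\ldots,|W_I|-1$, starting from $(\Gamma_{\geq k/\geq k}F_x)_I=0$, so as to assemble $(\Gamma_{\geq k/\geq k+|W_I|}F_x)_I$ as an iterated extension of the subquotients $(\Gamma_{\geq z_{k+i},>z_{k+i}}F_x)_I$ in $\calK^b(R\text{-Mod-}R^I)$.

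The heart of the argument is to apply the non-singular analogue of this lemma (namely \cite[Prop.~3.7]{LW}) to each subquotient: $\Gamma_{\geq z,>z}(F_x)$ vanishes in $\calK^b(R\text{-Mod-}R)$ unless $z=x$, in which case it is isomorphic to $R_x(-\ell(x))$. Restricting to $R\text{-Mod-}R^I$ preserves these isomorphisms. Since $z_{k+i}=y_m w_{i+1}$ always lies in the coset $y_mW_I$, and $x\in W^I$ is by definition the minimum of its own coset, the equality $z_{k+i}=x$ can hold only when $y_m=x$ and $w_{i+1}=e$; the latter forces $i=0$, because any Bruhat-refining enumeration of $W_I$ must begin with $w_1=e$. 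Consequently, every subquotient vanishes when $y\neq x$, while in the case $y=x$ exactly one subquotient (at $i=0$) survives and is isomorphic to $R_{x,I}(-\ell(x))$.

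To conclude, I would walk through the triangles: if $y\neq x$, all third terms are zero, so $(\Gamma_{\geq k/\geq k+i}F_x)_I\cong 0$ throughout the iteration; if $y=x$, the first triangle yields $(\Gamma_{\geq k/\geq k+1}F_x)_I\cong R_{x,I}(-\ell(x))$, and every subsequent triangle has a zero third term, so the isomorphism propagates up to $i=|W_I|$. The main potential obstacle---that a short exact sequence of complexes need not induce a triangle in the homotopy category---has already been dispatched by the preceding lemma, which showed that the restriction of~\eqref{doesnotsplit} to $R\text{-Mod-}R^I$ is termwise split; beyond that, the only combinatorial point to check carefully is the bookkeeping that identifies which index $i$ realizes $z_{k+i}=x$.
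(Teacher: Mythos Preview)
Your proposal is correct and follows essentially the same approach as the paper's own proof: reduce to the non-singular statement \cite[Prop.~3.7]{LW} via \eqref{meh}, then climb through the distinguished triangles \eqref{butatleastisatriangle} one step at a time. The only difference is cosmetic---you spell out the bookkeeping that $z_{k+i}=x$ forces $y_m=x$ and $i=0$, which the paper leaves implicit.
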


\begin{proof}
Let $m$ be such that $y_m= y$ and let $k$ be with $z_k=y_m$.
First assume  $x\neq y$. Then $x=z_j$ with $j<k$ or $j\geq k+|W_I|$.
For any $i$ with $0\leq i\leq |W_I|-1$,  by \cite[Prop 3.7]{LW}, we have
\[\Gamma_{\geq k+i/\geq k+i+1}F_x \cong 0.\]
Using \eqref{butatleastisatriangle}, 
by induction we obtain $\Gamma_{\geq k/\geq {k+|W_I|}}F_x\cong 0$, and by \eqref{meh} we get 
\[\Gamma^I_{\geq y/>y}(F_x^I)\cong \Gamma^I_{\geq y/>y}(F_{x,I})\cong (\Gamma_{\geq k/\geq {k+|W_I|}}F_x)_I\cong 0\in \calK^b(R\text{-Mod-}R^I).\]

Assume now $x=y$, so that $x=z_k$. Since
\[\Gamma_{\geq k/\geq k+1}F_x\cong R_x(-\ell(x)),\]
using \eqref{butatleastisatriangle} we can show by induction that
$(\Gamma_{\geq k/\geq {k+|W_I|}}F_x)_I\cong R_{x,I}(-\ell(x))$,  and finally by \eqref{meh} we get
\[\Gamma^I_{\geq x/>x}(F_x^I)\cong \Gamma^I_{\geq x/> x}(F_{x,I})\cong (\Gamma_{\geq k/\geq {k+|W_I|}}F_x)_I\cong  R_{x,I}(-\ell(x)).\qedhere\]
\end{proof}


Dually, given  $x\in W^I$ we can define the complexes $E_x^I$ as the minimal complex of $(E_{s_1}E_{s_2}\ldots E_{s_k})_I$, where $s_1s_2\ldots s_k$ is any reduced expression of $x$ (the complex $E_s$ is defined in \eqref{Es}). Similar arguments to those above show that for any $x,y\in W^I$ we have
\[\Gamma^I_{\leq y,< y}(E_x^I)=\begin{cases}0&\text{if }y\neq x,\\R_{x,I}(\ell(x))&\text{if }y=x.\end{cases}\]

As in \cite{LW}, we can define the \emph{augmented singular Rouquier complexes} as 
\begin{eqnarray*}
\tilde{F_x^I}:=\cone(f_x) & \text{where} & f_x: R_{x,I}(-\ell(x))=H^0(F_x^I)\raw F_x\\
\tilde{E_x^I}:=\cone(e_x) & \text{where} & e_x: E_x^I\raw R_{x,I}(\ell(x))=H^0(E_x^I).\\
\end{eqnarray*}

We write $\Hom_\calK(-,-)$ to denote the morphisms in $\calK^b(R$-Mod-$R^I)$.
Combining \cite[Theorem 7.4.1]{W4} and Lemma \ref{deltasplit} we obtain, by the same argument of \cite[Corollary 3.10]{LW}, the following result.

\begin{cor}\label{augmented}
	For any $H\in \calK^b(\sbim^I)$ we have
	\[\Hom_\calK(H,\tilde{E_x^I})=0=\Hom_\calK(\tilde{F_x^I},H).\]
\end{cor}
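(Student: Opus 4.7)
The plan is to apply $\Hom_\calK(-, H)$ to the defining distinguished triangle
\[ R_{x,I}(-\ell(x)) \xra{f_x} F_x^I \to \tilde F_x^I \to R_{x,I}(-\ell(x))[1] \]
and to read off the vanishing $\Hom_\calK(\tilde F_x^I, H) = 0$ from the resulting long exact sequence. Concretely, this reduces the problem to showing that the pre-composition map
\[ f_x^* : \Hom_\calK^\bullet(F_x^I, H) \to \Hom_\calK^\bullet(R_{x,I}(-\ell(x)), H) \]
is an isomorphism in all homological degrees. The dual vanishing $\Hom_\calK(H, \tilde E_x^I) = 0$ will follow by applying $\Hom_\calK(H, -)$ to the triangle $E_x^I \xra{e_x} R_{x,I}(\ell(x)) \to \tilde E_x^I \xra{[1]}$ and arguing that post-composition with $e_x$ is an isomorphism.

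To establish that $f_x^*$ is an iso, I would argue by d\'evissage on $H$. Stupid truncation and iterated cones reduce to the case $H = B^I[j]$ for a single indecomposable $B^I \in \sbim^I$; the $\Gamma^I$-filtration of $B^I$ combined with triangles of the type \eqref{butatleastisatriangle} (now applied to the filtration of $H$ rather than $F_x$) reduces further to the case $H = R_{y,I}(k)[j]$ for a single $y \in W^I$. In this base case, the hin-und-her Lemma \cite[Lemma 6.3.2]{W4} identifies
\[ \Hom_\calK^\bullet(F_x^I, R_{y,I}(k)[j]) \cong \Hom_\calK^\bullet(\Gamma^I_{\geq y, > y}(F_x^I), R_{y,I}(k)[j]), \]
and by Lemma \ref{deltasplit} the right-hand side vanishes unless $y = x$, in which case $\Gamma^I_{\geq x, > x}(F_x^I) \cong R_{x,I}(-\ell(x))$ and $f_x^*$ becomes the identity. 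Theorem \ref{SHF} enters in controlling the ranks of the hom-spaces between standards so that the identifications above are dimensionally tight and no hidden contributions can appear.

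The main obstacle will be verifying, at each stage of the d\'evissage, that the induced long exact sequences are compatible with pre-composition by $f_x$, so that a 5-lemma argument can actually close each step. This is precisely the singular analogue of the argument leading to \cite[Corollary 3.10]{LW}; the ingredients Lemma \ref{deltasplit} (in place of \cite[Prop.~3.7]{LW}) and Theorem \ref{SHF} (in place of the classical Soergel Hom formula) have been set up so that the LW proof transfers verbatim, with the only non-trivial bookkeeping being the appearance of the subring $R^I$ on the right and the accompanying Poincar\'e factor $\tilde\pi(I)$ in the Hom formula.
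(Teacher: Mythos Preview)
Your overall strategy matches the paper's (which simply transfers the argument of \cite[Corollary~3.10]{LW} using Lemma~\ref{deltasplit} and Theorem~\ref{SHF}), and both the reduction to $H=B^I$ via stupid truncation and the base case $H=R_{y,I}$ via the identification $\Hom^\bullet(-,R_{y,I})\cong\Hom^\bullet(\Gamma^I_{\geq y/>y}(-),R_{y,I})$ are correct.

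There is, however, a gap in your d\'evissage from $B^I$ to standard modules. The $\Gamma^I$-filtration of a single bimodule $B^I$ does \emph{not} produce distinguished triangles in $\calK^b(R\text{-Mod-}R^I)$: a short exact sequence of modules concentrated in degree~$0$ is a triangle in the homotopy category only when it is split, and the sequences
\[0\to\Gamma^I_{\geq m+1}B^I\to\Gamma^I_{\geq m}B^I\to\Gamma^I_{\geq m/\geq m+1}B^I\to 0\]
are in general non-split (already for $I=\emptyset$ the sequence $0\to R_s(-1)\to B_s\to R(1)\to 0$ is non-split). The triangles~\eqref{butatleastisatriangle} concern splitting \emph{within} a single $W_I$-coset upon restricting the right action to $R^I$; they do not let you pass between different cosets, which is what filtering $B^I$ requires.

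The repair---and this is where Theorem~\ref{SHF} genuinely enters, exactly as in \cite{LW}---is to run the d\'evissage on the Hom complex rather than on $H$ via triangles. The results underlying the singular Hom formula (see \cite[\S7]{W4}) show that for each term ${}^j\tilde F_x^I$ the standard filtration of $B^I$ induces a filtration on $\Hom^\bullet({}^j\tilde F_x^I,B^I)$ whose associated graded pieces are the free $R$-modules $\Hom^\bullet({}^j\tilde F_x^I,R_{y,I})$ with the expected multiplicities. Assembling over $j$ yields a filtration of the complex $\Hom^\bullet(\tilde F_x^I,B^I)$ by subcomplexes whose successive quotients $\Hom^\bullet(\tilde F_x^I,R_{y,I})$ are contractible by your base case, and the argument concludes as you indicate.
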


We also obtain a generalization of \cite[Theorem 1.1]{LW}.

\begin{lemma}\label{deltanabla}
	For any $x,y\in W^I$ and $m\in \bbZ$ we have 
	\[\Hom_{\calK}(F_x^I,E_y^I[m])\cong \begin{cases}R_I & \text{if }x=y\text{ and }m=0,\\ 0& \text{otherwise.}\end{cases}\]
\end{lemma}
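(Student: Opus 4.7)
The plan is to emulate the proof of \cite[Theorem 1.1]{LW} by exploiting the two augmentation triangles, Corollary \ref{augmented}, and the $\Delta$-split property of Lemma \ref{deltasplit} to reduce the computation to a Hom between standard bimodules. First I would apply $\Hom_\calK(F_x^I,-)$ to the defining distinguished triangle
\[E_y^I \xrightarrow{e_y} R_{y,I}(\ell(y)) \to \tilde{E_y^I} \xrightarrow{[1]}.\]
Since $F_x^I \in \calK^b(\sbim^I)$, Corollary \ref{augmented} gives $\Hom_\calK(F_x^I,\tilde{E_y^I}[k])=0$ for every $k$, and the long exact sequence collapses to
\[\Hom_\calK(F_x^I,E_y^I[m]) \cong \Hom_\calK(F_x^I, R_{y,I}(\ell(y))[m]).\]

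Next I would localize this right-hand side at $y$ via the Bruhat filtration. The standard bimodule $R_{y,I}(\ell(y))$ is supported on $Gr^I(y)$; for any $z \in W^I$ with $z \neq y$, the cosets $zW_I$ and $yW_I$ in $W/W_I$ are distinct, so by reflection-faithfulness the loci $Gr^I(z)$ and $Gr^I(y)$ are disjoint in $\frh \times (\frh/W_I)$. Applying this termwise shows $\Hom_\calK(C, R_{y,I}(\ell(y))[k])=0$ for any complex $C$ whose terms are supported in $Gr^I(W^I \setminus \{y\})$. Combining this vanishing with the distinguished triangles
\[\Gamma^I_{>y}(F_x^I) \to \Gamma^I_{\geq y}(F_x^I) \to \Gamma^I_{\geq y,>y}(F_x^I) \xrightarrow{[1]}, \qquad \Gamma^I_{\geq y}(F_x^I) \to F_x^I \to F_x^I/\Gamma^I_{\geq y}(F_x^I) \xrightarrow{[1]},\]
I obtain
\[\Hom_\calK(F_x^I, R_{y,I}(\ell(y))[m]) \cong \Hom_\calK(\Gamma^I_{\geq y,>y}(F_x^I), R_{y,I}(\ell(y))[m]).\]
Lemma \ref{deltasplit} then forces this Hom to vanish unless $y=x$, in which case it reduces to $\Hom_\calK(R_{x,I}(-\ell(x)), R_{x,I}(\ell(x))[m])$. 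This last space vanishes for $m\neq 0$ (both complexes are concentrated in homological degree zero) and for $m=0$ is identified with $R_I$ after absorbing the grading shift $2\ell(x)$ into the graded Hom.

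The step requiring the most care will be the support-based localization in the second paragraph: one has to verify that the terms of $\Gamma^I_{>y}(F_x^I)$ and of $F_x^I/\Gamma^I_{\geq y}(F_x^I)$ are indeed supported in $Gr^I(W^I\setminus\{y\})$, and use that disjointness of supports forces the vanishing of all bimodule maps at the chain level, hence of all Homs in the homotopy category. Once these technicalities are settled, the rest of the argument is a routine chase of long exact sequences combined with bookkeeping of grading shifts.
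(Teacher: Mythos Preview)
Your overall strategy is viable, but it diverges from the paper's proof and contains a real gap that you have not identified. The paper does \emph{not} pass through the support filtration of $F_x^I$ at all. Instead, it uses Corollary~\ref{augmented} \emph{twice}, symmetrically: from the $E$-augmentation it gets $\Hom_\calK(F_x^I,E_y^I[m])\cong\Hom_\calK(F_x^I,R_{y,I}(\ell(y))[m])$, and from the $F$-augmentation it gets $\Hom_\calK(F_x^I,E_y^I[m])\cong\Hom_\calK(R_{x,I}(-\ell(x)),E_y^I[m])$. The first expression vanishes termwise whenever $y\not\leq x$, since every summand of ${}^iF_x^I$ is some $B_z^I(k)$ with $z\leq x$ and $\Hom(B_z^I,R_{y,I})=0$ for $y\not\leq z$; the second vanishes termwise whenever $x\not\leq y$ by the dual argument. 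This covers all $x\neq y$, and the case $x=y$ is a direct one-line computation. No filtration triangles are needed.

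The gap in your argument is the sentence ``combining this vanishing with the distinguished triangles\ldots''. You have written down two short exact sequences of complexes, but you have not shown that they are \emph{termwise split}, and the paper explicitly warns (see the footnote after \eqref{doesnotsplit}) that such sequences do not in general yield distinguished triangles in $\calK^b(R\text{-Mod-}R^I)$. For a Soergel bimodule $B$, the inclusion $\Gamma^I_{\geq y}B\hookrightarrow B$ is the $\nabla$-flag filtration, and it is typically \emph{not} split as $(R,R^I)$-bimodules; so your ``routine chase of long exact sequences'' is not available. Your closing caveat worries about the wrong point: the disjointness of supports and the resulting chain-level vanishing are indeed straightforward, but the existence of the long exact sequences themselves is not. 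The gap is repairable---one can bypass the triangles by invoking \cite[Theorem 7.3.5(ii)]{W4} (as in Remark~\ref{not<x}) to obtain a termwise natural isomorphism $\Hom({}^iF_x^I,R_{y,I})\cong\Hom(\Gamma^I_{\geq y/>y}({}^iF_x^I),R_{y,I})$ and hence an isomorphism of Hom-complexes---but as written the argument is incomplete, and the paper's symmetric use of both augmentations is both shorter and avoids the issue entirely.
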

\begin{proof}
It follows form Corollary \ref{augmented} that
	\[ \Hom_{\calK}(F_x^I,R_{y,I}(\ell(y))[m])\cong \Hom_{\calK}(F_x^I,E_y^I[m])\cong \Hom_{\calK}(R_{x,I}(-\ell(x)),E_y^I[m]). \]
	
	Notice that all the summands of ${}^iF_x^I$  and of ${}^iE_x^I$ are of the form $B_z^{I}(m_z)$ for some $z\leq x$ and, moreover, we have $z<x$  if $i\neq 0$.
	In particular, if $y\not\leq x$ we have $\Hom({}^iF_x^I,R_{y,I}(\ell(y))=0$ for all $i$, hence \[\Hom_{\calK}(F_x^I,R_{y,I}(\ell(y)[m])=0\] for all $m$.
	Dually, if $x\not\leq y$ we have $\Hom(R_{x,I}(-\ell(x)),{}^iE_{y}^I)=0$ for all $i$, hence \[\Hom_{\calK}(R_{x,I}(-\ell(x)),E_{y}^I[m])=0\] for all $m$.
	It remains to consider the case $x=y$. We have 
	\begin{multline*}
	\Hom_\calK(R_{x,I}(-\ell(x)),E_x^I)\cong \Hom_{\calK}(R_{x,I}(-\ell(x)),{}^0E_x^I)=\\=\Hom_{\calK}(R_{x,I}(-\ell(x)),\Gamma^I_{\leq x/<x}({}^0E_x^I))=\Hom_{\calK}(R_{x,I}(-\ell(x)),R_{x,I}(-\ell(x)))=R_I.\qedhere
	\end{multline*}
\end{proof}

\subsection{Singular Rouquier Complexes and the Support Filtration}

The homological properties of (singular) Rouquier complexes observed in the last section turn out to be useful to understand the support filtration of (singular) Rouquier complexes.

Let $x\in W^I$ and consider a reduced expression $\undx=s_1s_2\ldots s_k$. The bimodule $B_x^I$ is a direct summand of $BS(\undx)_I={}^0(F_{s_1}F_{s_2}\ldots F_{s_k})_I$, but it is not a direct summand of ${}^i(F_{s_1}F_{s_2}\ldots F_{s_k})_I$ for any $i>0$. Hence $B_x^I$ must also be a direct summand of ${}^0F_x^I$. Similarly $B_x^I$ is a direct summand of ${}^0E_x^I$.

\begin{lemma}\label{Rqfact}
	Let $x,y \in W^I$ with $y< x$ and  $m\in\bbZ$. Then every map $B_y^I(m)\xra{\varphi} B_x^I$ factors through ${}^{-1}E_x^I$.
\end{lemma}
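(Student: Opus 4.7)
The plan is to promote the single map $\varphi$ to a chain map into $E_x^I$, show this chain map is null-homotopic using Corollary \ref{augmented}, and then extract the desired factorization from the null-homotopy.

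First I would recall, as noted in the paragraph preceding the lemma, that $B_x^I$ is a direct summand of ${}^0 E_x^I$. Since $E_x^I$ is concentrated in nonpositive homological degrees, the composition
\[g : B_y^I(m) \xra{\varphi} B_x^I \hookrightarrow {}^0 E_x^I\]
automatically defines a chain map $g: B_y^I(m) \to E_x^I$, where $B_y^I(m)$ is viewed as a complex concentrated in homological degree $0$.

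Next I would use the defining triangle of the augmented complex,
\[E_x^I \xra{e_x} R_{x,I}(\ell(x)) \to \tilde{E_x^I} \xra{[1]},\]
and apply $\Hom_\calK(B_y^I(m),-)$ to obtain the exact sequence
\[\Hom_\calK(B_y^I(m), \tilde{E_x^I}[-1]) \to \Hom_\calK(B_y^I(m), E_x^I) \to \Hom_\calK(B_y^I(m), R_{x,I}(\ell(x))).\]
The leftmost term vanishes by Corollary \ref{augmented}. The rightmost term vanishes by a support argument: any element of $B_y^I(m)$ has support contained in $Gr^I(\leq y)$, while every nonzero element of the standard bimodule $R_{x,I}(\ell(x))$ has support equal to $Gr^I(x)$; since $y < x$ implies $x \not\leq y$, these supports are disjoint and any bimodule map must be zero. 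Therefore $\Hom_\calK(B_y^I(m), E_x^I) = 0$, i.e.\ $g$ is null-homotopic.

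Finally I would unwind what null-homotopy means for a source concentrated in degree $0$: there exists a bimodule map $h: B_y^I(m) \to {}^{-1}E_x^I$ such that $g = d_{E_x^I} \circ h$, where $d_{E_x^I}: {}^{-1}E_x^I \to {}^0 E_x^I$ is the differential. Composing with the projection ${}^0E_x^I \twoheadrightarrow B_x^I$ onto the distinguished summand recovers $\varphi$, giving the required factorization $B_y^I(m) \xra{h} {}^{-1}E_x^I \to B_x^I$. The main obstacle is the support vanishing $\Hom(B_y^I(m), R_{x,I}(\ell(x))) = 0$, but this follows immediately from the fact that $B_y^I$ is supported in $Gr^I(\leq y)$ (a consequence of the hin-und-her lemma used in Section~\ref{Deltasplit}) combined with the fact that $R_{x,I}$ is the standard bimodule attached to $x$ alone.
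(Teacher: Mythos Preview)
Your proof is correct and follows essentially the same approach as the paper. Both arguments embed $\varphi$ into ${}^0E_x^I$, use Corollary \ref{augmented} together with the vanishing $\Hom(B_y^I(m),R_{x,I}(\ell(x)))=0$ for $y<x$, and extract the factorization through ${}^{-1}E_x^I$ from the resulting exactness; you phrase this via the distinguished triangle and an explicit null-homotopy, while the paper writes down the three-term exact sequence directly, but these are the same argument.
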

\begin{proof}
	After choosing a decomposition ${}^0E_x^I=B_x^I\oplus ({}^0E_x^I)'$, the map $\varphi$ induces a map $\varphi: B_z^I(m)\raw {}^0E_x^I$.	
	By Corollary \ref{augmented} we have an exact sequence
	\[\Hom(B_y^I(m),{}^{-1}E_x^I)\raw \Hom(B_y^I(m), {}^0E_x^I)\raw \Hom(B_y^I(m), R_{x,I}(\ell(x)))\raw 0.\]
	The claim now follows since $\Hom(B_y^I(m), R_{x,I}(\ell(x)))=0$ for $y<x$.	
\end{proof}

\begin{remark}\label{not<x}
	Let now $x,y\in W^I$ be arbitrary. Choose a decomposition ${}^0E_x^I=B_x^I\oplus ({}^0E_x^I)'$ as above. Since $\Hom^\bullet(({}^0E_x^I)',R_{x,I})=0$, by Corollary \ref{augmented} we also have an exact sequence 
	\[ \Hom^\bullet(B_y^I,{}^{-1}E_x^I)\xra{\vartheta} \Hom^\bullet(B_y^I,B_x^I)\raw \Hom^\bullet(B_y^I,R_{x,I}(\ell(x)))\raw 0\]
	
	We claim that the image of the map $\vartheta$ is $\Hom_{<x}^\bullet(B_y^I,B_x^I)$.
	In fact, if a map $B_y^I\raw  B_x^I(k)$ factors  through $B_z^I(k')$ for some $z<x$, then by Lemma \ref{Rqfact} it also factors through ${}^{-1}E_x^I(k)$. In addition, we have
	\[\Hom_{\not <x}^\bullet(B_y^I,B_x^I)\cong \Hom^\bullet(B_y^I,R_{x,I}(\ell(x)))\cong \Hom^\bullet(\Gamma_{\geq x/>x}^IB_y^I,R_{x,I})(-\ell(x))\]	
	where the second isomorphism is \cite[Theorem 7.3.5 (ii)]{W4}. So we can give an equivalent definition of the character map $\cha:[\sbim^I]\raw \calH^I$ via 
	\begin{equation}\label{newch}
	\cha([B^I])= \sum_{x\in W^I}\grrk \Hom^\bullet(B^I,R_{x,I})v^{-\ell(x)}\bfH_x^I=\sum_{x\in W^I}\bar{\grrk(\Gamma_{\geq x/>x}^IB^I)}v^{\ell(x)}\bfH_x^I
	\end{equation}
	where $\bar{(-)}:\bbZ[v,v^{-1}]\raw \bbZ[v,v^{-1}]$ is the automorphism defined by  $\bar{v}=v^{-1}$.
\end{remark}

We can use Lemma \ref{deltasplit} to give a useful characterization of the support filtration. For $x\in W^I$, the elements in $B_x^{I}$ of degree $-\ell(x)$ form a one-dimensional vector space. Let $c_{\text{bot}}\in B_x^I$ be a non-zero element of this vector space.
\begin{lemma}\label{suppochar}
	Let $B^I\in \sbim^I$ and $y\in W^I$. Then 
	\begin{equation}\label{suppo}
	\Gamma_{\leq y}^IB^I=\spa_R\langle \varphi(c_{\text{bot}}) \mid \varphi\in \Hom^\bullet(B_y^I,B^I)\rangle.
	\end{equation}
\end{lemma}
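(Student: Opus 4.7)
The inclusion $\supseteq$ is immediate: every $(R,R^I)$-bimodule map preserves support, and $c_{\text{bot}}\in B_y^I$ has support contained in $Gr^I(\leq y)$, so $\varphi(c_{\text{bot}})\in \Gamma^I_{\leq y}B^I$. For the reverse inclusion, write $V_y(B^I):=\spa_R\langle \varphi(c_{\text{bot}})\mid \varphi\in\Hom^\bullet(B_y^I,B^I)\rangle$. I first observe that $V_y(B^I)$ is automatically an $(R,R^I)$-subbimodule, since the identity $\varphi(c_{\text{bot}})\cdot r'=(\varphi\cdot r')(c_{\text{bot}})$ realises the right $R^I$-action on the indexing set. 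A routine Bott--Samelson check shows that $B_y^I$ is bimodule-generated by $c_{\text{bot}}$, so in fact $V_y(B^I)=\sum_{\varphi}\ima(\varphi)$.

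The plan is to prove $\Gamma^I_{\leq y}B^I\subseteq V_y(B^I)$ by induction on $y$ in the Bruhat order on $W^I$. Granted the inductive hypothesis $\Gamma^I_{\leq y'}B^I=V_{y'}(B^I)$ for all $y'<y$, the step reduces to two claims: \emph{(1)} $V_{y'}(B^I)\subseteq V_y(B^I)$ for every $y'<y$ (which then yields $\Gamma^I_{<y}B^I\subseteq V_y(B^I)$); and \emph{(2)} the composition $V_y(B^I)\hookrightarrow \Gamma^I_{\leq y}B^I\twoheadrightarrow \Gamma^I_{\leq y/<y}B^I$ is surjective. These two together deliver $\Gamma^I_{\leq y}B^I=V_y(B^I)$.

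Claim \emph{(1)} reduces to the existence, for each pair $y'<y$, of a morphism $\tau\in \Hom^{\ell(y)-\ell(y')}(B_y^I,B_{y'}^I)$ with $\tau(c_{\text{bot}}^{(y)})$ a nonzero scalar multiple of $c_{\text{bot}}^{(y')}$: the composition $\psi\circ\tau$ is then a morphism $B_y^I\to B^I$ whose image contains $\ima(\psi)$ (the image of $\tau$ is the bimodule generated by $\tau(c_{\text{bot}}^{(y)})$, hence all of $B_{y'}^I$). Such a $\tau$ exists by Soergel's Hom formula (Theorem \ref{SHF}) together with a degree count: the minimum degree piece of $\Hom^\bullet(B_y^I,B_{y'}^I)$ sits in degree $\ell(y)-\ell(y')$, and since the minimal-degree piece of $B_{y'}^I(\ell(y)-\ell(y'))$ is one-dimensional and spanned by $c_{\text{bot}}^{(y')}$, such a $\tau$ necessarily sends $c_{\text{bot}}^{(y)}$ into $\bbK\cdot c_{\text{bot}}^{(y')}$. (Alternatively, one can exhibit $\tau$ explicitly as the restriction to $B_y^I$ of a Bott--Samelson multiplication map $BS(\undx)_I\to BS(\undx')_I$, where $\undx'$ is a reduced subexpression of $\undx$ expressing $y'$.)

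For Claim \emph{(2)}, use that $\Gamma^I_{\leq y/<y}B^I$ is a direct sum of graded shifts of the standard bimodule $R_{y,I}$ by \cite[Thm.~7.3.5]{W4}, and that any morphism $B_y^I\to \Gamma^I_{\leq y/<y}B^I$ automatically factors through the standard quotient $B_y^I\twoheadrightarrow \Gamma^I_{\leq y/<y}B_y^I=R_{y,I}(\ell(y))$ (because its image has support exactly $\{y\}$). Evaluation at $c_{\text{bot}}^{(y)}$ therefore identifies $\Hom^\bullet(B_y^I,\Gamma^I_{\leq y/<y}B^I)$ with $\Gamma^I_{\leq y/<y}B^I$ up to shift. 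The main obstacle is the lifting claim that every morphism $B_y^I\to \Gamma^I_{\leq y/<y}B^I$ lifts to a morphism $B_y^I\to B^I$: this amounts to the vanishing of $\Ext^1(B_y^I,\Gamma^I_{<y}B^I)$. I expect this vanishing to follow from the singular Rouquier complex machinery of \S\ref{Deltasplit}; more precisely, from the fact that $B_y^I$ is a direct summand of ${}^0E_y^I$ together with Corollary \ref{augmented} applied to the augmented complex $\tilde E_y^I$, which forces any obstruction class to come from an element of $\Hom^\bullet(B_y^I,B^I)$.
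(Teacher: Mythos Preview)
Your easy inclusion and Claim (1) are both correct and match the paper's opening moves; in particular the existence of a morphism $B_y^I\to B_{y'}^I(\ell(y)-\ell(y'))$ sending $c_{\text{bot}}$ to $c_{\text{bot}}$ is exactly the paper's first reduction.

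The genuine gap is Claim (2). You reduce it correctly to the lifting problem $\Hom^\bullet(B_y^I,\Gamma^I_{\leq y}B^I)\twoheadrightarrow\Hom^\bullet(B_y^I,\Gamma^I_{\leq y/<y}B^I)$, i.e.\ to the vanishing of $\Ext^1(B_y^I,\Gamma^I_{<y}B^I)$, but your sketch does not establish this. Corollary \ref{augmented} only applies to $H\in\calK^b(\sbim^I)$, and neither $\Gamma^I_{<y}B^I$ nor $\Gamma^I_{\leq y/<y}B^I$ is a singular Soergel bimodule, so you cannot feed them in as $H$. What one \emph{can} extract from $\Hom_\calK(\tilde F_y^I,B^I)=0$ is that $\Hom(B_y^I,B^I)$ modulo maps factoring through ${}^1F_y^I$ is identified with $\Hom(R_{y,I}(-\ell(y)),B^I)\cong\Gamma^I_{\{y\}}B^I$; this is a different object from $\Gamma^I_{\leq y/<y}B^I$ and does not give the surjection you need. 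The Ext-vanishing you want is essentially the statement that Soergel's $\nabla$-filtration on Hom-spaces splits, which is a real theorem, not a formal consequence of the augmented-complex vanishing.

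The paper sidesteps this by organising the induction differently. It first reduces to $B^I=B_x^I$ indecomposable and to $y\leq x$, and then inducts on $\ell(x)$ rather than on $y$, using the Rouquier complex of the \emph{target} $B_x^I$ rather than of the source $B_y^I$. For $y<x$ the complex $\Gamma^I_{\leq y}E_x^I$ is contractible (by the dual of Lemma \ref{deltasplit}); since $E_x^I$ lives in nonpositive homological degrees, contractibility forces the differential $\Gamma^I_{\leq y}({}^{-1}E_x^I)\to\Gamma^I_{\leq y}({}^0E_x^I)$ to be surjective, hence so is its projection to $\Gamma^I_{\leq y}B_x^I$. Every summand of ${}^{-1}E_x^I$ is some $B_z^I(k)$ with $z<x$, so the induction on $\ell(x)$ applies to each. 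The case $y=x$ is then handled separately using $\Gamma^I_{\leq x/<x}B_x^I\cong R_{x,I}(\ell(x))$ generated by the class of $c_{\text{bot}}$, reducing to the case $y<x$ already treated.
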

\begin{proof}
	
	For $b\in B_y^I$, we clearly have $\supp \varphi(b) \cug \supp b\cug \{\leq y \}$, hence the inclusion $\supseteq$ in \eqref{suppochar} follows.
	We show now the reverse inclusion. 	
	
	If $y\geq z$ there exists a morphism $\psi:B_y^I\raw B_z^I(\ell(y)-\ell(z))$ such that $\psi(c_{\text{bot}})=c_{\text{bot}}$. So we can replace the RHS in \eqref{suppo} with $\spa_R\langle \varphi(c_{\text{bot}}) \mid \varphi\in \Hom^\bullet(B_z^I,B^I)$ for some $z\leq y\rangle$.
	
	It is enough to show the claim for $B^I$ indecomposable, i.e. $B=B_x^I$ for some $x\in W^I$.
	 Since 
	 \[\Gamma_{\leq y}^I(B_x^I)=\bigcup_{z\leq x \text{ and }z\leq y}\Gamma_{\leq z}^I B_y^I\] it is enough to consider the case $y\leq x$.
	Let $b\in \Gamma_{\leq y}^I(B_x^I)$.
	Consider the singular Rouquier complex $E_x^I$. If $y<x$, from $\Gamma_{\leq y}E_x^I\cong 0$  we deduce that $\Gamma_{\leq y}({}^{-1}E_x)\raw \Gamma_{\leq y}(B_x^I)$ is surjective.
	Moreover, every direct summand in ${}^{-1}E_x$ is of the form $B_z^I(k)$ with $z< x$, so the claim easily follows by induction on $\ell(x)$. 
	
	If $y=x$ we have
	$\Gamma_{\leq x/<x}B_x^I\cong R_{x,I}(\ell(x))$, and it is generated by the image of $c_{\text{bot}}$. Hence for any $b\in B_x^I$ there exists $f\in R$ such that  $b-fc_{\text{bot}} \in \Gamma_{<y}^IB^I_x$. The claim now follows from  the previous case.
\end{proof}

\subsection{Soergel's Conjecture and the Perverse Filtration}

For some of our applications we need  Soergel's conjecture to hold for our representation $\frh^*$. To ensure this, we require that the results of \cite{EW1} are available, i.e. we require that  $\bbK=\bbR$ and assume $\frh^*$ is a reflection faithful representation of $W$ with a good notion of positive roots (cf. \cite[\S 2]{Deo3}). 
Such a representation always exist: see for example the construction given in 
 \cite[Prop 2.1]{S4} or in \cite[Prop 1.1]{Ric}.
By \cite[Theorem 3]{W4}, Soergel's conjecture for Soergel bimodule \cite{EW1} implies the corresponding result for singular Soergel bimodules:
\begin{theorem}\label{SConj}
Assume $\bbK=\bbR$ and $\frh^*$ as above. Then for $x\in W^I$ we have $\cha(B_x^I)=\undH^I_x$.
\end{theorem}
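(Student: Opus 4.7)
The plan is to reduce Theorem \ref{SConj} to the ordinary Soergel conjecture \cite{EW1} via the bridge $B_x^I \otimes_{R^I} R(\ell(w_I)) \cong B_{xw_I}$ recorded in Remark \ref{alt-def}.

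First I would study how the functor $F := (-)\otimes_{R^I} R(\ell(w_I)) : \sbim^I \to \sbim$ interacts with the character map. Applying Remark \ref{alt-def} to the identity $e\in W^I$ (where $B_e^I=R$) yields the special case $R\otimes_{R^I}R(\ell(w_I))\cong B_{w_I}$. Consequently, for any $B'\in\sbim$,
\[
F(B'_I)\;\cong\; B'\otimes_R\bigl(R\otimes_{R^I}R(\ell(w_I))\bigr)\;\cong\; B'\otimes_R B_{w_I}.
\]
Using the multiplicativity of $\cha$ (the commutative diagram of Section 3) together with the non-singular Soergel conjecture $\cha(B_{w_I})=\undH_{w_I}$ gives
\[
\cha(F(B'_I))\;=\;\cha(B')\cdot \undH_{w_I}\;=\;\cha(B'_I)\;\in\;\calH^I\subset\calH,
\]
where the last equality uses $\cha(B'_I)=\cha(B')\undH_{w_I}$ from Remark \ref{alt-def}. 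In other words, on classes coming from restrictions, $\cha\circ F$ agrees with the inclusion $\iota:\calH^I\hookrightarrow\calH$, $\undH_y^I\mapsto \undH_{yw_I}$. Since such classes generate $[\sbim^I]$ as a $\bbZ[v,v^{-1}]$-module (by upper-triangularity of Bott--Samelson decompositions in Theorem \ref{SWcat}), the identity $\cha(F(B))=\iota(\cha(B))$ extends by linearity to every $B\in \sbim^I$.

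Specializing to $B=B_x^I$, Remark \ref{alt-def} gives $F(B_x^I)\cong B_{xw_I}$, and by \cite{EW1} its character is $\undH_{xw_I}$. Therefore $\iota(\cha(B_x^I))=\undH_{xw_I}$. Because $\{\undH_{yw_I}\}_{y\in W^I}$ lie inside the Kazhdan--Lusztig basis of $\calH$ (hence are $\bbZ[v,v^{-1}]$-linearly independent) and $\iota(\undH_y^I)=\undH_{yw_I}$, we conclude $\cha(B_x^I)=\undH_x^I$.

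The main technical point is the character-compatibility of $F$ in the middle paragraph; as outlined, it reduces entirely to the case $x=e$ of Remark \ref{alt-def} combined with the non-singular Soergel conjecture for $B_{w_I}$, so no new input beyond \cite{EW1} is required. This is also the route taken in \cite[Theorem 3]{W4}.
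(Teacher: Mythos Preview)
Your proposal is correct and follows the same route the paper invokes: the paper does not give its own argument but simply cites \cite[Theorem 3]{W4} together with \cite{EW1}, and your proof is precisely the reduction to the non-singular case carried out there, using the isomorphism $B_x^I\otimes_{R^I}R(\ell(w_I))\cong B_{xw_I}$ from Remark~\ref{alt-def} and the compatibility $\cha(B_I)=\cha(B)\undH_{w_I}$. One small remark: since $\undH_y^I$ is \emph{defined} in this paper as $\undH_{yw_I}$, your map $\iota$ is literally the inclusion of the ideal $\calH^I\subset\calH$, so the final injectivity step is immediate and needs no appeal to linear independence of the KL basis.
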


With these assumptions, it follows from Theorem \ref{SHF} that for $x>y$ we have
 \[ \grrk \Hom^\bullet_{\not< y}(B_x^I,B_y^I)=h_{y,x}^I(v)\]
and, as a consequence, for any $x,y\in W^I$
\begin{equation}\label{homvanishing}
\Hom^i(B_x^I,B_y^I)\cong \begin{cases}0&\text{if }i<0,\text{ or }i=0\text{ and }x\neq y\\
\bbR & \text{if }i=0\text{ and }x=y.\end{cases}
\end{equation}

For any bimodule $B^I\in \sbim^I$ we have a (non-canonical) decomposition
\begin{equation}\label{singdec}
B^I=\bigoplus (B_x^I(i))^{\oplus m_{x,i}},
\end{equation}
and we can define the \emph{perverse filtration} $\tau$ on $B^I$ as
\[\tau_{\leq j}B^I=\bigoplus_{i\geq -j} (B_x^I(i))^{\oplus m_{x,i}}.\]
As a consequence of the vanishing of homomorphisms of negative degree \eqref{homvanishing}, the perverse filtration does not depend on the choice of the  decomposition in \eqref{singdec}.

A bimodule $B^I\in \sbim^I$ is said \emph{perverse} if we can write $\cha([B^I])=\sum_{x\in W^I}m_x\undH_x^I$ with $m_x\in \bbZ_{\geq 0}$ or, equivalently, if $\tau_{\leq -1}B^I=0$ and $\tau_{\leq 0}B= B$.

\begin{definition}
	We define ${}^p\calK^{\geq 0}$ to be the full subcategory of $\calK^b(\sbim^I)$ with objects complexes in $\calK^b(\sbim^I)$ which are isomorphic to a complex $F$ satisfying $\tau_{\leq -i-1}{}^iF=0$ for all $i\in \bbZ$.
	
	Similarly, we define ${}^p\calK^{\leq 0}$ to be the full subcategory whose objects are complexes in $\calK^b(\sbim^I)$ which are isomorphic to a complex $F$ satisfying ${}^iF=\tau_{\leq -i}{}^iF$ for all $i\in \bbZ$. 
	
	Let ${}^p\calK^0:={}^p\calK^{\geq 0}\cap {}^p\calK^{\leq 0}$.
\end{definition}

It follows from Theorem \ref{SConj} and Theorem \ref{SHF} that the pair $({}^p\calK^{\leq0},{}^p\calK^{\geq 0})$ defines a non-degenerate $t$-structure on $\calK^b(\sbim^I)$, called the \emph{perverse $t$-structure}. We denote by ${}^p\calK^0$ the \emph{heart} of this $t$-structure. One should regard ${}^p\calK^0$ as the category of equivariant mixed perverse sheaves on the (possibly non-existent) parabolic flag variety associated to $I$.

It is clear that the following statement analogous to \cite[Lemma 6.1]{EW1} holds in the singular setting: for a distinguished triangle
\[F'\raw F\raw F''\xra{[1]}\]
in $\calK^b(\sbim)$,  if $F',F''\in {}^p\calK^{\geq0}$ (resp. $\calK^{\leq 0}$), then $F\in {}^p\calK^{\geq0}$ (resp. $\calK^{\leq 0}$).

\begin{lemma}\label{RC1}
	Given a Rouquier complex $F_x\in \calK^b(\sbim)$, the functor 
	\[F_x\otimes (-):\calK^b(\sbim^I)\raw \calK^b(\sbim^I)\] is left $t$-exact with respect to the perverse $t$-structure, i.e. it restricts to a functor ${}^p\calK^{\geq0}\raw {}^p\calK^{\geq0}$.
\end{lemma}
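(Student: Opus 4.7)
The plan is to reduce to the case $x=s\in S$ and then handle it via a distinguished triangle coming from the two-term structure of $F_s$. Since $F_x\cong F_{s_1}\cdots F_{s_k}$ in $\calK^b(\sbim)$ for any reduced expression $x=s_1\cdots s_k$, and the functors $F_{s_i}\otimes(-)$ compose to $F_x\otimes(-)$, it suffices to prove that $F_s\otimes(-):\calK^b(\sbim^I)\raw\calK^b(\sbim^I)$ preserves ${}^p\calK^{\geq 0}$ for each $s\in S$.

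Fix $s\in S$. The termwise split short exact sequence $0\raw R(1)[-1]\raw F_s\raw B_s\raw 0$ gives, after backward rotation, a distinguished triangle $B_s[-1]\raw R(1)[-1]\raw F_s\xra{+1}$ in $\calK^b(\sbim)$. Tensoring over $R$ with a complex $F\in {}^p\calK^{\geq 0}$ produces
\[(B_s\otimes F)[-1]\raw F(1)[-1]\raw F_s\otimes F\xra{+1}\]
in $\calK^b(\sbim^I)$. By the closure property of ${}^p\calK^{\geq 0}$ under distinguished triangles (the analogue of \cite[Lemma 6.1]{EW1} stated just before the lemma), it is enough to show that both outer terms lie in ${}^p\calK^{\geq 0}$.

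Replace $F$ by an isomorphic representative such that every summand of ${}^jF$ has the form $B_z^I(k)$ with $k\leq j$. At position $i$ of $F(1)[-1]$ the term is ${}^{i-1}F(1)$, whose summands are $B_z^I(k+1)$ with $k+1\leq i$; so $F(1)[-1]\in {}^p\calK^{\geq 0}$ is immediate. The crux is the analogous bound for $(B_s\otimes F)[-1]$. By Soergel's conjecture (Theorem \ref{SConj}) and the commutativity of the character map with tensor products, $\cha(B_s\otimes B_z^I)=\undH_s\undH_z^I$. The standard left multiplication rule in $\calH$ reads
\[\undH_s\undH_{zw_I}=\begin{cases}\undH_{szw_I}+\sum \mu(y,zw_I)\undH_y & \text{if } s(zw_I)>zw_I,\\ (v+v^{-1})\undH_{zw_I} & \text{if } s(zw_I)<zw_I,\end{cases}\]
whose Kazhdan-Lusztig coefficients have $v$-degree at most $1$. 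Since $\undH_s\undH_z^I\in\calH^I$ and the parabolic basis consists of those $\undH_y$ with $y=y'w_I$ for $y'\in W^I$, the same bound transfers to the expansion in the $\{\undH_y^I\}$ basis. Hence every summand of $B_s\otimes B_z^I$ has the form $B_y^I(j)$ with $j\leq 1$, and consequently the summands of $B_s\otimes{}^{i-1}F$ have grading shift $\leq (i-1)+1=i$, yielding $(B_s\otimes F)[-1]\in {}^p\calK^{\geq 0}$.

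The main obstacle is this last step: controlling the perverse amplitude of $B_s\otimes B_z^I$. It depends crucially on Soergel's conjecture to identify $\cha(B_z^I)=\undH_z^I$, and on the elementary fact that left multiplication by $\undH_s$ changes the $v$-degree by at most $1$. Without such positivity/boundedness input one has no a priori grip on the summands of $B_s\otimes B_z^I$, so Theorem \ref{SConj} is essential to the argument.
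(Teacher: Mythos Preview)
Your reduction to $x=s$ is fine and is exactly what the paper does. The gap is in the triangle step. In a $t$-structure, the subcategory ${}^p\calK^{\geq 0}$ is closed under \emph{extensions}: in a triangle $F'\raw F\raw F''\xra{[1]}$ one deduces $F\in{}^p\calK^{\geq 0}$ from $F',F''\in{}^p\calK^{\geq 0}$. It is \emph{not} closed under taking cones. In your rotated triangle
\[(B_s\otimes F)[-1]\raw F(1)[-1]\raw F_s\otimes F\xra{+1},\]
the object $F_s\otimes F$ sits in the third slot, so knowing that the first two terms lie in ${}^p\calK^{\geq 0}$ only yields $F_s\otimes F\in{}^p\calK^{\geq -1}$. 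Equivalently, if you use the unrotated triangle $F(1)[-1]\raw F_s\otimes F\raw B_s\otimes F\xra{+1}$, you would need $B_s\otimes F\in{}^p\calK^{\geq 0}$; but your own bound on the summands of $B_s\otimes B_z^I$ is shift $\leq 1$, which again gives only ${}^p\calK^{\geq -1}$.

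The problematic case is precisely $s(zw_I)<zw_I$, where $B_sB_z^I\cong B_z^I(1)\oplus B_z^I(-1)$: the summand $B_z^I(1)$ sits in cohomological degree $0$ with shift $+1$, so $F_sB_z^I$ as written fails the ${}^p\calK^{\geq 0}$ condition termwise. What saves the day, and what your argument does not see, is that the differential $B_z^I(1)\raw B_z^I(1)$ in $F_sB_z^I$ is an isomorphism (because $F_s\otimes(-)$ is an autoequivalence, so $F_sB_z^I$ is indecomposable, and $\End(B_z^I)=\bbR$), hence this piece is contractible and $F_sB_z^I\cong B_z^I(-1)$ in $\calK^b(\sbim^I)$. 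This cancellation is exactly the content of case~(ii) in the paper's proof, and it cannot be replaced by a pure amplitude/triangle bound.
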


\begin{proof}
	We can assume $x=s\in S$.
	Since the category ${}^p\calK^{\geq 0}$ is generated under extensions by the objects $B_y^I(m)[n]$, with $y\in W^I$ and $m+n\leq 0$ it is enough to show that $F_sB_y^I\in {}^p\calK^{\geq0}$ for all $y\in W^I$.
	We divide the proof into two cases:
	\begin{enumerate}[i)]
		\item Assume $sxw_I>xw_I$. We have $\displaystyle \cha(B_sB_x^I)=\undH_s\undH_{x}^I=\undH_{sx}^I +\sum_{\substack{z\in W^I\\ z<xs}} m_z\undH_z^I$ with $m_z\in \bbZ_{\geq 0}$. From Theorem \ref{SConj}, we get
		\[B_sB_x^I\cong B_{sx}^I\oplus \bigoplus_{\substack{z\in W^I\\ z<sx}} (B_z^I)^{\oplus m_z}.\]
		and the complex
		\[F_sB_x^I= [0\raw \overset{0}{B_sB_x^I}\raw B_x^I(1)\raw 0]\]
		is manifestly in ${}^p\calK^{\geq 0}$.
		\item Assume $sxw_I<xw_I$. Then we have $\cha(B_sB_x^I)=\undH_s\undH_{x}^I=\undH_s\undH_{xw_I}=(v+v^{-1})\undH_{x}^I$.  Therefore $B_sB_x^I\cong B_x^I(1)\oplus B_x^I(-1)$ and 
		\[F_sB_x^I=[0\raw \overset{0}{B_x^I(1)\oplus B_x^I(-1)}\raw B_x^I(1)\raw 0].\]
		Tensoring with $F_s$ induces an equivalence on the category $\calK^b(\sbim^I)$, and since $B_x^I$ is indecomposable also the complex $F_sB_x^I$ must be indecomposable. Therefore, the map $B_x^I(1)\raw B_x^I(1)$ cannot be trivial, otherwise $\overset{0}{B_x^I(1)}$ would be a non-trivial direct summand of $F_sB_x^I$.
		Since $B_x^I(1)\raw B_x^I(1)$ is non zero, it is an isomorphism by \eqref{homvanishing} and $B_x^I(1)\raw B_x^I(1)$ is a contractible direct summand. Removing this contractible summand we obtain $F_sB_x^I\cong B_x^I(-1)\in {}^p\calK^{\geq 0}$.\qedhere
	\end{enumerate}
\end{proof}

\begin{cor}\label{Kgeq0}
	For any $x\in W^I$ we have $F_x^I\in {}^p\calK^{\geq 0}$.
\end{cor}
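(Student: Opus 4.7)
The plan is to reduce the statement to Lemma \ref{RC1} by recognizing $F_x^I$ as the tensor product $F_x \otimes B_{id}^I$ in $\calK^b(\sbim^I)$. The trivial $I$-singular Soergel bimodule $B_{id}^I$ is just $R$ regarded as an $(R,R^I)$-bimodule: it is indecomposable, self-dual, and concentrated in internal degree zero. Viewed as a complex in homological degree zero, it visibly satisfies $\tau_{\leq -1} B_{id}^I = 0$, and thus belongs to ${}^p\calK^0 \subseteq {}^p\calK^{\geq 0}$.

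Next I would apply Lemma \ref{RC1} to the Rouquier complex $F_x$ with input $B_{id}^I$: the functor $F_x \otimes (-) : \calK^b(\sbim^I) \to \calK^b(\sbim^I)$ is left $t$-exact, so it sends $B_{id}^I$ into ${}^p\calK^{\geq 0}$. Unwinding definitions, the object $F_x \otimes_R B_{id}^I$ is precisely $(F_x)_I$, the termwise restriction of the Rouquier complex $F_x$ to a complex of $(R,R^I)$-bimodules. Since $F_x^I$ is by definition the minimal complex of $(F_x)_I$ in $\calC^b(\sbim^I)$, we have $F_x^I \cong (F_x)_I$ in $\calK^b(\sbim^I)$, and membership in ${}^p\calK^{\geq 0}$ is a property of the isomorphism class in the homotopy category. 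Hence $F_x^I \in {}^p\calK^{\geq 0}$.

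There is no substantive obstacle here: the argument is a direct consequence of Lemma \ref{RC1}. The only small verifications are that $B_{id}^I = R$ acts as the unit for the monoidal action $\sbim \times \sbim^I \to \sbim^I$ (so that $F_x \otimes B_{id}^I$ really does compute $(F_x)_I$) and that passing to the minimal complex does not disturb the perverse lower bound, both of which are formal.
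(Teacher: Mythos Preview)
Your proof is correct and follows exactly the paper's own argument: the paper observes that $R_I\in{}^p\calK^{\geq 0}$ (your $B_{id}^I$) and that $F_x^I\cong F_x\otimes R_I$ in $\calK^b(\sbim^I)$, then invokes Lemma~\ref{RC1}. You have simply spelled out a few of the routine identifications in more detail.
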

\begin{proof}
	This easily follows from Lemma \ref{RC1} since  $R_I\in{}^p\calK^{\geq 0}$ and $F_x^I\cong F_x\otimes R_I$ in $\calK^b(\sbim^I)$.
\end{proof}

\subsection{Singular Rouquier Complexes are Linear}\label{linear}

When Soergel's conjecture holds, we can describe quite explicitly the singular Rouquier complexes. (This explicit description is a crucial tool in \cite{Pat3} where the case of Grassmannians is studied in detail.)

\begin{lemma}\label{RC5}
Let $x\in W^I$.
For $i>0$ if ${}^iF_x^I$ contains a direct summand isomorphic to $B_z^I(j)$, then ${}^{i-1}F_x^I$ contains a direct summand isomorphic to $B_{z'}^I(j')$ with $z'>z$ and $j'<j$.
\end{lemma}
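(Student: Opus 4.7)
The plan is to apply the support-subquotient functor $\Gamma^I_{\geq z/>z}$ to the minimal complex $F_x^I$, use Lemma~\ref{deltasplit} to pin down the homotopy type of the result, and count $R_{z,I}$-summands at each homological position. Throughout I assume $F_x^I$ is perverse (as should have been established in the preceding material), so every summand of ${}^kF_x^I$ has the form $B_z^I(k)$. In particular $j=i$ in the hypothesis, and the inequality $j'<j$ in the conclusion is automatic as soon as I produce any summand at position $i-1$. The key quantitative input is that, by Soergel's conjecture combined with Remark~\ref{not<x}, for $y\geq z$ one has $\Gamma^I_{\geq z/>z}B_y^I \cong \bigoplus_{m\geq 0}R_{z,I}(m-\ell(z))^{\oplus a_m^y}$ with $a_m^y:=[v^m]h_{z,y}^I(v)$, so $a_0^y\neq 0$ if and only if $y=z$.

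First I would rule out the case $z=x$. Applying $\Gamma^I_{\geq x/>x}$ termwise to $F_x^I$, only the $B_x^I(k)$ summands contribute (summands $B_y^I$ of $F_x^I$ satisfy $y\leq x$, forcing $y=x$). Under perversity, each differential component between two such summands is a bimodule map in $\Hom^1(B_x^I,B_x^I)$, which under $\Gamma^I_{\geq x/>x}$ lands in $\Hom^1(R_{x,I},R_{x,I})=R_1=0$ since $R$ is concentrated in even degrees. Therefore $\Gamma^I_{\geq x/>x}(F_x^I)$ has zero differential and coincides with its own cohomology, which by Lemma~\ref{deltasplit} is $R_{x,I}(-\ell(x))$ concentrated in homological degree $0$. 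Consequently no summand $B_x^I(i)$ can appear at any position $i>0$, and we may henceforth assume $z<x$.

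For $z<x$, Lemma~\ref{deltasplit} gives $\Gamma^I_{\geq z/>z}(F_x^I)\cong 0$ in $\calK^b(R\text{-Mod-}R^I)$, so this complex is contractible. Any bounded contractible complex of graded free $(R,R^I)$-bimodules of the form $\bigoplus R_{z,I}(a_i)$ splits in the additive category as a direct sum of trivial two-term pieces $[R_{z,I}(a)\to R_{z,I}(a)]$ with identity differential. Writing $N_p$ for the multiplicity of $R_{z,I}(-\ell(z)+i)$ at position $p$ and $A_p$ for the number of trivial pieces starting at $p$ with that shift, one has $N_p=A_p+A_{p-1}$. The relation $m-\ell(z)+k=-\ell(z)+i$ then shows: at $k=i+1$ one needs $m=-1$, impossible, so $N_{i+1}=0$ and hence $A_{i+1}=A_i=0$; at $k=i$ one needs $m=0$, giving $N_i\geq m_{z,i}\geq 1$; at $k=i-1$ one needs $m=1$, which never arises from a $B_z^I(i-1)$ summand (since $h_{z,z}^I=1$) but only from summands $B_{y'}^I(i-1)$ with $y'>z$ and $[v^1]h_{z,y'}^I\neq 0$. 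The identity $N_i=A_{i-1}$ combined with $N_{i-1}\geq A_{i-1}\geq 1$ then forces at least one such summand $B_{y'}^I(i-1)$ to occur in ${}^{i-1}F_x^I$, providing $z':=y'>z$ with $j':=i-1<i=j$.

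The main obstacle is the bookkeeping just sketched, and in particular the verification that a bounded contractible complex of graded free $R_{z,I}$-modules really does decompose into trivial two-term pieces in the additive category. This is a standard linear-algebra exercise using the contracting homotopy (combined with Krull--Schmidt for graded free $R_{z,I}$-modules), so once it is in place the multiplicity count is immediate and no further perversity-type calculation is needed beyond what was already invoked for $F_x^I$.
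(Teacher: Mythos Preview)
Your argument rests on an assumption that makes it circular: you take $F_x^I$ to be perverse, so that every summand of ${}^kF_x^I$ has shift exactly $k$. In the paper's logical order, however, perversity of $F_x^I$ is Theorem~\ref{RC4}, and its proof \emph{uses} the present lemma; at this stage only Corollary~\ref{Kgeq0} is available, which gives $F_x^I\in{}^p\calK^{\geq 0}$ and hence merely $j\le i$ for a summand $B_z^I(j)$ of ${}^iF_x^I$. Without full perversity your key step $N_{i+1}=0$ collapses: a priori some $B_y^I(j'')$ with $j''\le i$ may sit in ${}^{i+1}F_x^I$ and contribute a copy of $R_{z,I}(i-\ell(z))$ via $m=i-j''\ge 0$. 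Your claim that ``$j'<j$ is automatic'' and your degree-one argument in the $z=x$ paragraph likewise depend on $j=k$ at every position.

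The paper follows the same overall strategy---apply $\Gamma^I_{\geq z/>z}$ and invoke Lemma~\ref{deltasplit}---but replaces your perversity hypothesis with the \emph{minimality} of $F_x^I$ together with the Hom vanishing \eqref{homvanishing}. Any nonzero component of $d^i$ out of $B_z^I(j)$ must land in some $B_{z''}^I(j'')$ with $j''>j$, since a degree-$0$ component would be an isomorphism and hence a contractible summand, while negative-degree components vanish. After applying $\Gamma^I_{\geq z/>z}$ the image therefore lies in $\Gamma^I_{\geq z/>z}(\tau_{<-j}({}^{i+1}F_x^I))$, which contains no summand $R_{z,I}(j-\ell(z))$. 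Thus the copy of $R_{z,I}(j-\ell(z))$ produced by $B_z^I(j)$ cannot be the first term of a trivial two-term piece; it must be the second, yielding a matching $R_{z,I}(j-\ell(z))$ at position $i-1$. That summand can only come from some $B_{z'}^I(j')$ with $z'\ge z$ and $j'+m=j$ for some $m\ge 0$, and the same minimality argument applied to $d^{i-1}$ forces $j'<j$; then $m\ge 1$ excludes $z'=z$. Your counting can be repaired along exactly these lines, but as written it presupposes the theorem this lemma is meant to establish.
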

\begin{proof}
The proof is basically the same as in \cite[Lemma 6.11]{EW1}. From Theorem \ref{SConj} and \eqref{newch} we see that for any $y,z\in W^I$
the bimodule $\Gamma^I_{\geq z/>z}(B_y^I)$ is generated in degree $<\ell(z)$ if $y>z$ and $\Gamma^I_{\geq y/>y}(B_y^I)\cong R_{y,I}(-\ell(y))$.

The image of $B_z^I(j)$ in ${}^{i+1}F_x^I$ is contained in $\tau_{<-j}({}^{i+1}F_x^I)$ because of \eqref{homvanishing}: in fact any non-zero homomorphism in degree $0$ is an isomorphism and thus yields a contractible direct summand.

Applying $\Gamma^I_{\geq z/> z}$ to $F_x^I$ the direct summand $B_z^I(j)$ returns a summand $R_{z,I}(j-\ell(z))$.
This cannot be a direct summand in $\Gamma^I_{\geq z/> z} (\tau_{<-j}{}^{i+1}F_x^I)$ and cannot survive in the cohomology of the complex because of Lemma \ref{deltasplit}.
Thus $R_{z,I}(j-\ell(z))$ must be the image of a direct summand $R_{z,I}(j-\ell(z))$ in $\Gamma_{\geq z/>z}(\tau_{> -j}({}^{i-1}F_x))$. 

This implies that there is a direct summand $B_{z'}^I(j')$ in ${}^{i-1}F_x$ with $z'>z$ and $j'<j$.
\end{proof}

\begin{theorem}\label{RC4}
Let $x\in W^I$ and let $F_x^I$ be a singular Rouquier complex. Then:
\begin{enumerate}[i)]
\item ${}^0F_x^I=B_x^I$.
\item For $i\geq 1$, ${}^iF_x^I=\bigoplus (B_z^I(i))^{\oplus m_{z,i}}$ with $z<x$, $z\in W^I$ and $m_{z,i}\in \bbZ_{\geq 0}$.
\end{enumerate}
In particular, $F_x^I\in {}^p\calK^0$.
\end{theorem}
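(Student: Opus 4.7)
The plan is to prove the theorem by induction on the homological degree $i$, combining Corollary~\ref{Kgeq0} with Lemma~\ref{RC5}. Corollary~\ref{Kgeq0} gives $F_x^I\in {}^p\calK^{\geq 0}$, so any summand $B_z^I(j)$ of ${}^iF_x^I$ automatically satisfies $j\leq i$; and $z\leq x$ holds because $F_x^I$ is a direct summand of $BS(\undx)_I$ whose indecomposable summands satisfy this bound. The real content is therefore the matching lower bound $j\geq i$, together with the precise description of ${}^0F_x^I$.

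For the inductive step ($i\geq 1$), assume every summand of ${}^{i-1}F_x^I$ has shift exactly $i-1$. For any summand $B_z^I(j)$ of ${}^iF_x^I$, Lemma~\ref{RC5} produces an ancestor summand $B_{z'}^I(j')$ in ${}^{i-1}F_x^I$ with $z'>z$ and $j'<j$; the inductive hypothesis then forces $j'=i-1$, so $j\geq i$, and combined with $j\leq i$ this yields $j=i$. Moreover $z<z'\leq x$ gives $z<x$, establishing (ii) for this $i$.

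The base case $i=0$ requires more care. $B_x^I$ is a summand of ${}^0F_x^I$ because it sits inside $BS(\undx)_I={}^0(F_{s_1}\cdots F_{s_k})_I$ but not in any higher homological degree of $(F_{s_1}\cdots F_{s_k})_I$; a character computation using Theorem~\ref{SConj} moreover shows it occurs with multiplicity exactly one and with no nontrivial grading shift. To exclude extra summands $B_z^I(j)$ with $z<x$ and $j\leq 0$, I apply the functor $\Gamma^I_{\geq z/>z}$ to $F_x^I$: by Lemma~\ref{deltasplit} the resulting complex is null-homotopic, hence split-acyclic, in $\calK^b(R\text{-Mod-}R^I)$. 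Such a putative summand would contribute a direct summand $R_{z,I}(j-\ell(z))$ at position $0$ that must be paired, under the splitting, with an identical summand of $\Gamma^I_{\geq z/>z}({}^1F_x^I)$. But by the argument in the proof of Lemma~\ref{RC5}, the image of the differential lies in $\Gamma^I_{\geq z/>z}(\tau_{<-j}({}^1F_x^I))$, whose components are all generated in degrees strictly below $\ell(z)-j$, so no such pairing is possible and we obtain a contradiction.

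Putting (i) and (ii) together, every summand of ${}^iF_x^I$ has shift exactly $i$, which is precisely the condition $F_x^I\in{}^p\calK^0$. The main obstacle is the base case: the split-acyclicity argument must be executed carefully, checking both that minimality of $F_x^I$ rules out the appearance in ${}^1F_x^I$ of an isomorphic copy $B_z^I(j)$ that could supply the missing partner (using \eqref{homvanishing} to argue that any such nonzero differential would be an isomorphism, contradicting minimality), and that the contributions from summands $B_y^I(k)$ with $y>z$ really are generated below degree $\ell(z)-j$, a fact inherited from Theorem~\ref{SConj} as in the proof of Lemma~\ref{RC5}.
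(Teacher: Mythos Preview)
Your proposal is correct and follows essentially the same approach as the paper. Both arguments combine Corollary~\ref{Kgeq0} (for the upper bound on shifts) with an induction whose step is exactly Lemma~\ref{RC5}, and both handle the base case $i=0$ by running the Lemma~\ref{RC5} argument against ${}^{-1}F_x^I=0$; your write-up is simply more explicit than the paper's terse ``one could use the same argument as in Lemma~\ref{RC5}'', and your extra character computation for the multiplicity of $B_x^I$ is a harmless redundancy (the $\Gamma^I_{\geq x/>x}$ argument with Lemma~\ref{deltasplit} already forces ${}^0F_x^I\cong B_x^I$ on its own).
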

\begin{proof} One could use the same argument as in Lemma \ref{RC5} to deduce that since ${}^{-1}(F_x^I)=0$ and $\Gamma_{\geq x/>x}F_x^I$ we must have ${}^0(F_x^I)\cong B_x^I$.  By induction on $i$ we get ${}^iF_x^I=\tau_{\leq -i}F_x^I$ for any $i>0$. Now ii) follows since we already know $F_x^I\in {}^p\calK^{\geq 0}$ from Corollary \ref{Kgeq0}.
\end{proof}

\begin{remark}\label{invKLr}
 We can define the character of a complex $F\in \calK^b(\sbim^I)$ by
 \[\cha(F)=\sum_{i\in \bbZ}(-1)^i\cha({}^iF)\in \calH.\]
 If $x\in W^I$ and $\undx=s_1s_2\ldots s_k$ is a reduced expression we have 
 \[\cha(F_x^I)=\cha((F_{s_1}F_{s_2}\ldots F_{s_k})_I)=\bfH_x\undH_{I}=:\bfH_x^I.\]
 An immediate consequence of Theorem \ref{SConj} is that there is a non trivial morphism of degree $i$ between $B_x^I$ and $B_y^I$ for $x,y\in W^I$ only if $i$ and $\ell(x)-\ell(y)$ have the same parity. Therefore for all summands $B_y^I(i)\cus {}^iF_x^I$ the number $i-\ell(y)+\ell(x)$ is even. 
Because of Theorem \ref{RC4} we can write
\begin{equation}\label{invKL}
\bfH_x^I=\sum_{i\geq 0}(-1)^i\cha({}^iF_x)=\sum_{y\leq x} (-1)^{\ell(y)-\ell(x)} g^I_{y,z}(v)\undH_y^I
\end{equation}
 with $g_{x,x}(v)=1$ and $g_{y,x}(v)=\sum_{i>0} m_{y,i}v^i\in v\bbN[v]$. The polynomials $g^I_{x,y}$ are called the $I$-\emph{parabolic inverse Kazhdan-Lusztig polynomials}, and they are also determined by the following \emph{inversion formula}:
 \begin{equation}\label{invform}
\sum_{y\in W^I}(-1)^{\ell(y)-\ell(x)}g_{x,y}^I(v)h_{y,z}^I(v)=\delta_{x,z}.
 \end{equation}
One can use \eqref{invKL} to deduce that the $I$-parabolic inverse Kazhdan-Lusztig polynomials $g^I_{x,y}(v)$ have non-negative coefficients.
\end{remark}

By a dual argument we have  that ${}^iE_x^I(i)\cong {}^iF_x^I(-i)$ for all $i$, so in particular also $E_x^I\in {}^p\calK^0$.

By looking at the coefficient of $v$ in \eqref{invform} we see that $m_{z,1}$,
the coefficient of $v$ in $g_{z,x}^I(v)$, equals the coefficient of $v$ in $h_{z,x}^I$(v), hence they both coincide with $\dim \Hom^1(B_z^I,B_x^I)$.

We denote  by $d_x^{i}$, for $i<0$, the differentials in the complex $E_x^I$.

\begin{lemma}\label{degreeone}
Let $x\in W^I$. For any $z\in W^I$ fix a basis $\{\varphi^z_i\}_{i=1}^{m_{z,1}}$ of $\Hom^1(B_z^I,B_x^I)$ . Then there exists an isomorphism $K: (B_z^I(-1))^{\oplus m_{z,1}}\isom {}^{-1}E_x^I$ such that the following diagram
\begin{center}
\begin{tikzpicture}
	\node (a) at (0,0) {${}^{-1}E_x^I$};
	\node (b) at (3,0) {$\displaystyle \bigoplus_z (B_z^I(-1))^{\oplus m_{z,1}}$};
	\node (c) at (0,-3) {${}^0E_x^I=B_x^I$};

    \path[<-]  (a)edge node[above]{$\sim$} node[below] {$K$} (b);
	\path[->] (a) edge node[left] {$d_x^{-1}$} (c); 
	\path[->] (b) edge node[right] {$\quad\displaystyle \bigoplus_{z,i} \varphi^z_i$ } (c);	
 \end{tikzpicture}
\end{center}
commutes.
\end{lemma}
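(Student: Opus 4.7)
The plan is to first pin down the isomorphism type of ${}^{-1}E_x^I$ using the dual of Theorem \ref{RC4}, then show that the components of the differential $d_x^{-1}$ on any decomposition of ${}^{-1}E_x^I$ into indecomposables automatically form a basis of $\Hom^1(B_z^I,B_x^I)$, and finally conclude by a scalar change of basis.

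By the dual of Theorem \ref{RC4} together with the identification $m_{z,1}=\dim \Hom^1(B_z^I,B_x^I)$ recalled just above, we may fix an isomorphism
\[\Phi\colon \bigoplus_{z\in W^I}(B_z^I(-1))^{\oplus m_{z,1}}\isom {}^{-1}E_x^I.\]
Note that any $z$ contributing satisfies $z<x$. For each $z$ and $i=1,\ldots,m_{z,1}$ let $\psi_i^z\colon B_z^I(-1)\hookrightarrow {}^{-1}E_x^I\xra{d_x^{-1}} B_x^I$ denote the corresponding component of $d_x^{-1}\circ \Phi$, viewed as an element of $\Hom^1(B_z^I,B_x^I)$. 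The claim is that $\{\psi_i^z\}_{i=1}^{m_{z,1}}$ is a basis of $\Hom^1(B_z^I,B_x^I)$.

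To see this, apply the functor $\Hom(B_z^I(-1),-)$ to the augmented complex $\tilde{E_x^I}=\cone(e_x)$. By Corollary \ref{augmented} the resulting complex is acyclic, whence the piece
\[\Hom(B_z^I(-1),{}^{-1}E_x^I)\xra{d_x^{-1}\circ-}\Hom^1(B_z^I,B_x^I)\raw \Hom^1(B_z^I,R_{x,I}(\ell(x)))\]
is exact. For $z<x$ the last term vanishes: by Remark \ref{not<x} it is isomorphic to $\Hom^1(\Gamma^I_{\geq x/>x}B_z^I,R_{x,I})(-\ell(x))$, and $\Gamma^I_{\geq x/>x}B_z^I=0$ since $\supp B_z^I\cug Gr^I(\leq z)$ does not meet $Gr^I(x)$. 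Hence the first arrow is surjective. By \eqref{homvanishing} its source is $\bigoplus_{z'}\Hom(B_z^I,B_{z'}^I)^{\oplus m_{z',1}}\cong \bbR^{m_{z,1}}$, matching the dimension of the target, so the surjection is an isomorphism and the $\psi_i^z$ form a basis of $\Hom^1(B_z^I,B_x^I)$.

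Since $\{\psi_i^z\}_i$ and $\{\varphi_i^z\}_i$ are both bases, for each $z$ there is an invertible matrix $A_z\in GL_{m_{z,1}}(\bbR)$ expressing one in terms of the other. The block-diagonal automorphism $A$ of $\bigoplus_z(B_z^I(-1))^{\oplus m_{z,1}}$ with blocks $A_z$ satisfies $d_x^{-1}\circ \Phi \circ A=\bigoplus_{z,i}\varphi_i^z$, so $K:=\Phi\circ A$ is the required isomorphism. The main step is the surjectivity coming from Corollary \ref{augmented}, which relies on the vanishing $\Hom^1(B_z^I,R_{x,I}(\ell(x)))=0$ for $z<x$; once this is in hand the argument is a dimension count together with linear algebra.
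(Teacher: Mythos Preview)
Your proof is correct. Both your argument and the paper's rest on the same ingredients---the structure of ${}^{-1}E_x^I$ from the dual of Theorem~\ref{RC4}, the vanishing in Corollary~\ref{augmented}, and \eqref{homvanishing}---but they are assembled differently. The paper starts from the given basis $\{\varphi_i^z\}$: it regards $\bigoplus_{z,i}\varphi_i^z$ as a degree-$0$ chain map $F:=\bigoplus_{z,i} F_z^I(-1)\to E_x^I$, uses Lemma~\ref{deltanabla} to produce a null-homotopy, observes that the component ${}^1F\to B_x^I$ of the homotopy vanishes (since ${}^1F$ is perverse with all summands $B_w^I$, $w<x$), and thus obtains $K$ directly as the remaining piece of the homotopy; it then argues that $K$ is an isomorphism because the $\varphi_i^z$ are linearly independent and $K$ has degree~$0$ between isomorphic objects. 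You instead fix an arbitrary decomposition $\Phi$ first, prove that the induced components $\psi_i^z$ of $d_x^{-1}\circ\Phi$ already form a basis (via acyclicity of $\Hom(B_z^I(-1),\tilde{E_x^I})$ together with a dimension count), and then produce $K$ by a block-diagonal change of basis. Your route is slightly more elementary in that it avoids bringing in the Rouquier complexes $F_z^I$ on the source side and the perversity of their degree-$1$ terms; the paper's route is a bit more conceptual in that $K$ arises directly as a homotopy rather than being assembled a posteriori from a change-of-basis matrix.
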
 
\begin{proof}
Let $\displaystyle B=\bigoplus_z(B_z^I(-1))^{\oplus m_{z,1}}$ and consider the map $\displaystyle \varphi:=\bigoplus_{z,i} \varphi^z_i:B\raw B_x^I$. Then $\varphi$
induces a map of complexes concentrated in homological degree $0$
\[\varphi:F:=\bigoplus_{z,i} F_z^I(-1)^{\oplus m_{z,1}}\raw E_x^I.\]
From Lemma \ref{deltanabla} we see that $\varphi$ is homotopic to $0$. However, since ${}^1F$ is perverse, any map $K':{}^1F\raw B_x^I$ must be trivial, so there exists
$K: B\raw {}^{-1}E_x^I$
such that 
$d_x^{-1}\circ K = \varphi$.

Since $\{\varphi_i^z\}_i$ is a basis, the map $K$ cannot vanish on any direct summand of $B$. Notice that $K$ is of degree $0$, therefore $K$ is a split injection. Since $B\cong {}^{-1}E_x$ we conclude that $K$ is an isomorphism.
\end{proof}

\section{Hodge Theory of Singular Soergel Bimodules}
\label{sec:hodge}
Once we have Lemma \ref{Deltasplit} at disposal, we can adapt almost word by word the arguments of \cite{EW1} to the setting of singular Soergel bimodules.
As the proof of the results in this section are completely analogous to \cite{EW1} (but nevertheless rather long and technical) we do not carry out the details in this paper, but we refer to \cite[Chapter  4]{PatPhD} for exhaustive proofs.

We assume we are in the setting of Section \ref{linear}, so $\bbK=\bbR$ and Soergel's conjecture holds for $\frh^*$.
We denote by $(\frh^*)^I\subset \frh^*$ the subspace of $W_I$-invariants.
Let $\rho\in (\frh^*)^I\cug R^I$. We say that $\rho$ is \emph{ample} if $\rho(\alpha^\vee_s)>0$ for any $s\in S\setminus I$. Note that there exists such a $\rho$ with this property since the set $\{\alpha^\vee_s\}_{s\in S}$ is linearly independent in $\frh^*$.

\begin{theorem}[Hard Lefschetz Theorem for Singular Soergel Bimodules]\label{SHL}
	Let $\rho\in \frh^*$ ample. Then right multiplication by $\rho$ induces a degree $2$ map on $\bar{B_x^I}:=\bbR\otimes_R B_x^I$ such that, for any $i>0$ we have  an isomorphism
	\[ \rho^i: (\bar{B_x^I})^{-i}\raw (\bar{B_x^I})^i.\]
	Here $(\bar{B_x^I})^i$ denotes the degree $i$ component of $B_x^I$.
\end{theorem}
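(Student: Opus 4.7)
The plan is to follow the Elias--Williamson strategy of \cite[\S 6--7]{EW1} almost verbatim, as the author indicates, using the properties of singular Rouquier complexes established above as the essential new technical input. One proves hard Lefschetz (hL) for $B_x^I$ jointly with the Hodge--Riemann bilinear relations (HR) for the Soergel form on $\bar{B_x^I}$, by a simultaneous induction on $\ell(x)$ that runs over all ample $\rho \in (\frh^*)^I$ at once. The base case $\ell(x)=0$ is trivial since $\bar{B_e^I} \cong \bbR$ is concentrated in degree zero, and the form is positive-definite there.

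For the inductive step, fix $x \in W^I$ with $\ell(x)>0$, choose a simple reflection $s$ appearing as the first letter of some reduced expression $x = s s_2 \ldots s_k$, and work with the factorization $F_x^I \cong F_s \otimes_R (F_{s_2}\ldots F_{s_k})_I$ in $\calK^b(\sbim^I)$. By Theorem \ref{RC4} the complex $F_x^I$ lies in ${}^p\calK^0$, with ${}^0 F_x^I = B_x^I$ and ${}^i F_x^I$ a direct sum of shifts $B_z^I(i)$ with $z<x$ for $i>0$; by induction hL and HR hold for each such $B_z^I$. Tensoring with $F_s$ is the algebraic avatar of restriction to a smooth hyperplane section, and the $\Delta$-splitness Lemma \ref{deltasplit} together with the Hom-vanishing Lemma \ref{deltanabla} furnish exactly the ``Rouquier-complex-as-Lefschetz-substitute'' ingredient used in \cite[Lemma 6.15]{EW1}. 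Concretely, $L_\rho$ on $\bar{B_x^I}$ is self-adjoint for the Soergel form, and the analogue of the EW1 argument, read off from the minimal complex $F_x^I$, shows that both hL and HR for $B_x^I$ follow from the same statements on the strictly smaller summands appearing in the higher terms of $F_x^I$.

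The final step is the standard Hodge-theoretic limit argument in the ample cone \[\{\rho \in (\frh^*)^I \mid \rho(\alpha_s^\vee)>0 \text{ for all } s \in S\setminus I\}.\] HR is an open condition, and the signature of $L_\rho$ cannot jump inside the ample cone without forcing a nontrivial element of $\ker L_\rho^i$ in degree $-i$, which would contradict the induction hypothesis on the smaller summands in ${}^i F_x^I$. So it suffices to seed the induction at a single boundary point by letting $\rho$ degenerate to an appropriate facet, exactly as in \cite[\S 7]{EW1}. The main obstacle is precisely this seeding: one must verify that the restricted ample cone inside $(\frh^*)^I$ is large enough and that there are enough degeneration directions available to match the Elias--Williamson argument. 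Once this is checked, every other piece---the adjointness of $L_\rho$, the primitive decomposition, the signed limit argument, and the manipulation of embedded and complementary parts---transports formally from \cite{EW1} thanks to Theorem \ref{RC4}, Lemma \ref{deltasplit}, and Lemma \ref{deltanabla}, which is why the author states that the proof can be adapted ``word by word''.
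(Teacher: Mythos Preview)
Your proposal is correct and matches the paper's own approach: the paper does not give a proof of Theorem~\ref{SHL} at all, but simply states at the start of \S\ref{sec:hodge} that once the $\Delta$-splitness of singular Rouquier complexes (\S\ref{Deltasplit}) is available, the arguments of \cite{EW1} go through ``almost word by word,'' and refers to \cite[Chapter~4]{PatPhD} for the details. Your outline---joint induction on hL and HR over the ample cone, using the perversity of $F_x^I$ (Theorem~\ref{RC4}), the $\Delta$-splitness (Lemma~\ref{deltasplit}), and the Hom-vanishing (Lemma~\ref{deltanabla}) as the singular replacements for the corresponding inputs in \cite{EW1}---is exactly what the paper asserts the argument to be, and your identification of the ``seeding'' in the restricted cone $(\frh^*)^I$ as the one point requiring genuine care is accurate.
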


The indecomposable bimodules $B_x^I$ are self-dual, and moreover $\Hom(B_x^I,B_x^I)\cong \bbR$. This implies that there exists a unique (up to scalar) bilinear form 
\[\langle-,-\rangle_{B_x^I}: B_x^I\times B_x^I \raw R\]
such that for any $b,b'\in B_x^I$, $f\in R$ and $g\in R^I$ we have
$$\langle fb,b'\rangle_{B_x^I}=\langle b,fb'\rangle_{B_x^I}=f\langle b,b'\rangle_{B_x^I},$$
$$\langle bg,b'\rangle_{B_x^I}=\langle b,b'g\rangle_{B_x^I}.$$ 
Let $\rho\in (\frh^*)^I$ ample. Then we fix the sign by requiring that $\langle b,b\cdot \rho^{\ell(x)}\rangle_{B_x^I}>0$
for any $0\neq b\in (B_x^I)^{-\ell(x)}$.

The intersection form induces a real valued symmetric and $R^I$-invariant form $\langle-,-\rangle_{\bar{B_x^I}}$ on $\bar{B_x^I}$. For $i\geq 0$ we define the Lefschetz form 
\[(-,-)^{-i}_\rho=\langle-,-\cdot \rho^i\rangle_{\bar{B_x^I}}:\bar{B_x^I}^{-i}\times \bar{B_x^I}^{-i}\raw \bbR.\]

\begin{theorem}[Hodge-Riemann bilinear Relations for singular Soergel modules]\label{SHR}
	Let $x\in W^I$. For all $i\geq 0$ the restriction of Lefschetz form $(-,-)_\rho^{-i}$ to $P_{\rho}^{-i}=\ker(\rho^{i+1})\cug (\bar{B_x^I})^{-i}$ is $(-1)^{(-\ell(x)+i)/2}$-definite.
\end{theorem}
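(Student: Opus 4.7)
The plan is to adapt the Elias-Williamson proof from \cite{EW1} essentially verbatim, carrying out a simultaneous induction on the Bruhat order in $W^I$ that proves both the hard Lefschetz statement (Theorem \ref{SHL}) and the Hodge-Riemann relations jointly for the indecomposable bimodules $B_x^I$. The base case $x=e$ is trivial since $\bar{B_e^I}$ is concentrated in a single degree. For the inductive step, fix $x\in W^I$ with $x\neq e$, assume HL and HR for every $B_y^I$ with $y<x$, and deduce HL and HR for $B_x^I$.

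Pick a simple reflection $s\in S$ with $sx\in W^I$ and $sx<x$; by Theorem \ref{SConj} the bimodule $B_sB_{sx}^I$ decomposes as $B_x^I\oplus\bigoplus_{y<x}(B_y^I(k_y))^{\oplus m_y}$. The strategy is to first establish HL and HR for the whole tensor product $B_sB_{sx}^I$ by pushing HR on $B_{sx}^I$ across tensoring with $B_s$, and then to peel off the lower summands (for which HR is inductively available) to isolate $B_x^I$. The crucial device is the singular Rouquier complex $F_x^I$: by Theorem \ref{RC4} it is linear and perverse, with ${}^0F_x^I=B_x^I$ and ${}^iF_x^I$ built from $B_y^I(i)$ with $y<x$ for $i>0$, and its differential supplies a ``weak Lefschetz substitute'' of degree $2$ on $B_x^I$ modulo lower summands. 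Combined with the explicit description of the degree-one maps provided by Lemma \ref{degreeone}, this substitute lets one transfer the signature of the Lefschetz form from $B_{sx}^I$ to $B_x^I$ via a one-parameter family of ample classes, the ampleness condition $\rho(\alpha_s^\vee)>0$ ensuring that no eigenvalue of the Lefschetz operator crosses zero along the deformation.

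The main obstacle is technical rather than conceptual: one has to verify that every signed bilinear identity, every Lefschetz-operator limit argument, and every primitive-decomposition signature computation of \cite{EW1} survives the replacement of the right action of $R$ by the right action of $R^I$. The two structural ingredients that make this transfer go through are precisely the $\Delta$-splitting property of $F_x^I$ (Lemma \ref{deltasplit}) and its linearity/perversity (Theorem \ref{RC4}), which are the singular analogues of the two Rouquier-complex inputs on which \cite{EW1} relies; the full verification is carried out in \cite[Chapter 4]{PatPhD}.
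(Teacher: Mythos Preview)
Your proposal is correct and takes essentially the same approach as the paper: the paper does not actually supply a proof of Theorem \ref{SHR} but simply asserts that the arguments of \cite{EW1} carry over once the singular Rouquier-complex inputs (the $\Delta$-splitting of Lemma \ref{deltasplit} and the perversity/linearity of Theorem \ref{RC4}) are in place, referring to \cite[Chapter 4]{PatPhD} for the full verification. Your outline is precisely that adaptation and cites the same reference; one small inaccuracy worth flagging is that the chosen left descent $s$ of $x\in W^I$ may lie in $I$ (e.g.\ $x=ts$ in type $A_2$ with $I=\{t\}$), in which case $\rho\in(\frh^*)^I$ gives $\rho(\alpha_s^\vee)=0$ rather than $>0$, so the deformation step needs to be phrased for $\rho(\alpha_s^\vee)\geq 0$ as in \cite{EW1}.
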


Theorem \ref{SHL} and Theorem \ref{SHR} have also consequences for non-singular bimodules, allowing us to extend the hard Lefschetz theorem and the Hodge-Riemann relations ``on the wall.''

Let $x\in W$ and $s\in S$ be  such that $xs>x$. Let $B_x\in \sbim$ be the corresponding indecomposable (non-singular) Soergel bimodule. Assume $I=\{s\}$, so that $w_I=s$. Then $(B_x)_I$ is a perverse singular Soergel bimodule, in fact we have:
\[\cha((B_x)_I)=\undH_x \undH_s=\undH_x^I+\sum_{\stackrel{ys>y}{y<x}}m_y\undH_y^I\qquad \text{ with }m_y\in \bbZ_{\geq 0}\]
We obtain the following:

\begin{cor}\label{semismallHL}
	Let $x\in W$ be such that $xs>x$. Then if $\rho\in (\frh^*)^s$ is ample, i.e. $\rho(\alpha_s^\vee)=0$ and $\rho(\alpha_t^\vee)>0$ for all $t\neq s$, multiplication by $\rho$ on $\bar{B_x}$ satisfies the hard Lefschetz theorem and the Hodge-Riemann bilinear relations.
\end{cor}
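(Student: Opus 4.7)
The plan is to deduce the corollary from Theorems \ref{SHL} and \ref{SHR} applied to the perverse singular Soergel bimodule $(B_x)_I$ for $I=\{s\}$. The hypothesis $\rho(\alpha_s^\vee)=0$ is exactly $\rho\in(\frh^*)^I$, and $\rho(\alpha_t^\vee)>0$ for all $t\neq s$ is precisely the ampleness of $\rho$ in $(\frh^*)^I$, so the two singular theorems are applicable to each indecomposable summand of $(B_x)_I$.

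By the character formula displayed just above the corollary together with Theorem \ref{SConj}, Soergel's conjecture produces an isomorphism
\[
  (B_x)_I \;\cong\; B_x^I \;\oplus\; \bigoplus_{\substack{y<x\\ ys>y}} (B_y^I)^{\oplus m_y} \qquad \text{in } \sbim^I.
\]
Applying $\bbR\otimes_R -$ yields a decomposition
\[
  \bar{B_x} \;\cong\; \bar{B_x^I} \;\oplus\; \bigoplus (\bar{B_y^I})^{\oplus m_y}
\]
of graded $R^I$-modules; the underlying graded $\bbR$-vector space of $\bbR\otimes_R(B_x)_I$ coincides with $\bar{B_x}$, since the left $R$-action is unaffected by restricting the right action from $R$ to $R^I$. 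Because $\rho\in R^I$, right multiplication by $\rho$ preserves this direct sum, so Theorem \ref{SHL} applied termwise assembles into the hard Lefschetz isomorphisms $\rho^i:\bar{B_x}^{-i}\isom \bar{B_x}^i$.

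For the Hodge-Riemann relations, Theorem \ref{SHR} yields the $(-1)^{(-\ell(y)+i)/2}$-definiteness of the Lefschetz form on the primitive part of each $(\bar{B_y^I})^{-i}$. The parity vanishing recorded in Remark \ref{invKLr} forces $\ell(x)\equiv \ell(y)\pmod 2$ whenever $m_y\neq 0$, so all of these signs coincide with $(-1)^{(-\ell(x)+i)/2}$. It remains to verify that the intersection form on $\bar{B_x}$, viewed as an $R^I$-invariant pairing, decomposes orthogonally with respect to the direct sum above and restricts to each $\bar{B_y^I}$ as a \emph{positive} scalar multiple of the canonical form used in Theorem \ref{SHR}. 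Orthogonality follows from Schur-type vanishing of degree-zero cross-morphisms between distinct indecomposables (an instance of the singular Hom formula, Theorem \ref{SHF}, combined with the parity consequence of Soergel's conjecture), and positivity of the comparison scalars is checked on lowest-degree elements using the sign convention for $\langle -, - \rangle_{B_y^I}$.

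The main obstacle is precisely this last reconciliation between the intersection form on $\bar{B_x}$, defined via self-duality in $\sbim$, and the canonical intersection forms on the singular summands $\bar{B_y^I}$, defined via self-duality in $\sbim^I$. The two duality functors carry different shift conventions, and tracking signs through them is the technical core of the argument; once this is handled the definiteness on each piece transfers additively to $\bar{B_x}$, giving both the hard Lefschetz theorem and the Hodge-Riemann relations for the boundary class $\rho$.
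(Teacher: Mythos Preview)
Your argument for hard Lefschetz is exactly the paper's: decompose $(B_x)_I$ into perverse singular pieces and apply Theorem \ref{SHL} to each summand.

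For the Hodge--Riemann relations, however, the paper does \emph{not} proceed via the decomposition. Instead it runs a deformation argument: set $\rho_\zeta=\rho+\zeta\varpi_s$ for $\zeta\geq 0$; for every $\zeta>0$ the class $\rho_\zeta$ is ample in the non-singular sense, so the Hodge--Riemann relations for $\bar{B_x}$ are already known from \cite{EW1}. Since hard Lefschetz holds at $\zeta=0$ (by the first paragraph), the family of Lefschetz forms $(-,-)_{\rho_\zeta}^{-i}$ is non-degenerate for all $\zeta\geq 0$, hence its signature is constant, and the Hodge--Riemann signs at $\zeta=0$ are inherited from $\zeta>0$.

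The reason the paper avoids your route is that your ``positivity of the comparison scalars'' step is a genuine gap. The intersection form on $B_x$ restricted to an isotypic block $(B_y^I)^{\oplus m_y}$ is $A_y\otimes\langle-,-\rangle_{B_y^I}$ for some symmetric non-degenerate $m_y\times m_y$ real matrix $A_y$, and what you need is that each $A_y$ is positive definite. The sign normalisation of $\langle-,-\rangle_{B_y^I}$ only pins down the form on a single copy of $B_y^I$; it says nothing about $A_y$. Checking on lowest-degree elements amounts to evaluating $\langle\iota(c_{\mathrm{bot}}),\iota(c_{\mathrm{bot}})\rho^{\ell(y)}\rangle_{B_x}$ for the various inclusions $\iota:B_y^I\hookrightarrow (B_x)_I$, and asking this to be a positive-definite form on the multiplicity space is exactly a special case of the Hodge--Riemann statement you are trying to prove. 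Indeed, Remark \ref{newSConj} shows the logic runs the other way: once Corollary \ref{semismallHL} is established (via the deformation), the definiteness of these multiplicity forms is \emph{deduced} from it. So your direct approach is circular as stated; the limit argument is what breaks the circle, by importing the interior Hodge--Riemann relations from \cite{EW1}.
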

\begin{proof}
	Since 
	\begin{equation}\label{singnonsing}
	(B_x)_I\cong B_x^I\oplus \bigoplus_{\stackrel{ys>y}{y<x}} (B_y^I)^{\oplus m_y}
	\end{equation}
	hard Lefschetz for $\bar{B_x}$ follows from hard Lefschetz for all $y$ such that $B_y^I$ is  a direct summand in \eqref{singnonsing}. 
	
	Let $\varpi_s$ be a fundamental weight for $s$ and let $\rho_\zeta=\rho+\zeta\varpi_s$ for $\zeta\geq 0$. From the non-singular case, multiplication by $\rho_\zeta$ satisfies hard Lefschetz on $B_x$ for all $\zeta\geq 0$ and Hodge-Riemann for every $\zeta>0$ . Since the signature of a family of non-degenerate forms does not change, we deduce the  Hodge-Riemann bilinear relation for $\rho_0=\rho$.
\end{proof}

\begin{remark}
	Corollary \ref{semismallHL} has the following geometric motivation. Assume that $W$ is the Weyl group of a complex semisimple group $G$. Let $x\in W$ be such that $xs>x$ for $s\in S$ and let $X_x\cug G/B$ be the corresponding Schubert variety.
	Let $\bfP_s$ be the minimal parabolic subgroup of $G$ containing $s$.  
	Then the restriction of the projection $G/B\raw G/\bfP_s$ to $X_x$ is semismall. It follows from \cite[Theorem 2.3.1]{dCM3} that the pull-back of any ample class on $G/\bfP_s$ satisfies hard Lefschetz and Hodge-Riemann on $X_x$.
\end{remark}

\begin{remark}\label{newSConj}
	We can obtain from Corollary \ref{semismallHL} an alternative proof of Soergel's conjecture, that translates more closely de Cataldo and Migliorini's proof of the decomposition theorem in \cite{dCM3}.
	
	Assume $w\in W$ such that $ws>w$ and assume that $\cha(B_x)=\undH_x$ for all $x<ws$. Let $I=\{s\}$ and fix $\rho$ ample in $(\frh^*)^s$. Let $x< w\in W$ be such that
	$xs>x$.  Consider the primitive subspace 
	\[ P_\rho^{-k}:= \ker (\rho^{k+1}) \cap (\bar{B_w})^{-k}.\] 
	
	We have a symmetric form 
	\[(-,-):\Hom(B_x^I,(B_w)_I)\times \Hom(B_x^I,(B_w)_I)\raw \End(B_x^I)\cong \bbR\] defined by $(f,g)=g^*\circ f$, where $g^*$ denotes the map adjoint to $g$ with respect of the intersection forms.
	Then we can show, as in \cite[Theorem 4.1]{EW1}, that the map 
	\[\iota:\Hom(B_x^I,(B_w)_I)\raw P_\rho^{-\ell(x)}\]
	defined by $f\mapsto f(c_{\text{bot}})$ is injective. Moreover, if we equip $P_\rho^{-\ell(x)}$ with the Lefschetz form, then $\iota$ is an isometry (up to a positive scalar) and, by the Hodge-Riemann bilinear relations, the form $(-,-)$ is definite on $\Hom(B_x^I,(B_w)_I)$. If $d=\dim \Hom(B_x^I,(B_w)_I)$, it follows that $(B_x^I)^{\oplus d}$ is a direct summand of $(B_w)_I$, hence $(B_{xs})^{\oplus d}$ is a direct summand of $B_wB_s$.
\end{remark}

\begin{example}
	Let $W$ be the Weyl group of type $A_3$ with simple reflections labeled $s,t,u$. Let $I=\{s,t\}$, so that $w_I=sts$. Then $stu\in W^I$ but a simple computation in the Hecke algebra shows that 
	$$\undH_{stu}\undH_{sts}=\undH_{stu}^I+\undH_{u}^I+
	(v+v^{-1})\undH_{id}^I.$$
	Therefore, the singular Soergel bimodule $(B_{stu})_I$ is not perverse, and there is no $\rho\in (\frh^*)^I$ which satisfies hard Lefschetz on $\bar{B_{stu}}$.
\end{example}

\bibliographystyle{alpha}

\end{document}